\documentclass[11pt,reqno]{article}
\setlength{\voffset}{-.75truein} \setlength{\textheight}{9truein}
\setlength{\textwidth}{6.9truein} \setlength{\hoffset}{-.9truein}

\usepackage{amsfonts,amsmath,amsthm}
\usepackage[colorlinks, linkcolor=blue, citecolor=blue, urlcolor=blue]{hyperref}



  \pagestyle{myheadings}


\newtheorem{thm}{Theorem}[section]%
\newtheorem{Proposition}[thm]{Proposition}%
\newtheorem{Lemma}[thm]{Lemma}%

\newtheorem{defi}[thm]{Definition}%
\newtheorem{Remark}[thm]{Remark}%
\numberwithin{equation}{section}

\newcommand{\upchi}{\raise1pt\hbox{$\chi$}}
\newcommand{\R}{{\mathord{\mathbb R}}}
\newcommand{\C}{{\mathord{\mathbb C}}}

\newcommand{\norm}[1]{\left\Vert#1\right\Vert}

\newcommand{\Bb}{{\mathcal B}}
\newcommand{\Cc}{{\mathcal C}}

\newcommand{\Kk}{{\mathcal K}}
\newcommand{\Gg}{{\mathcal G}}
\newcommand{\Ss}{{\mathcal S}}
\newcommand{\Tt}{{\mathcal T}}
\def\W{\widetilde{W}}

\def\tnm{{|\!|\!|}}


\begin{document}

\title{{\sc Decay Rates for a class of \\ Diffusive-dominated Interaction Equations}}

\author{
\vspace{5pt} Jos\'e A.  Ca\~nizo$^{1}$, Jos\'e A. Carrillo$^{2}$, and Maria E. Schonbek$^{3}$ \\
\vspace{5pt}\small{$^{1}$ Departament de Matem\`atiques}\\[-8pt]
\small{Universitat Aut\`onoma de Barcelona, E-08193 Bellaterra, Spain}\\[-4pt]
\small{Email: \texttt{canizo@mat.uab.es}}\\
\vspace{5pt}\small{$^{2}$ Instituci\'o Catalana de Recerca i Estudis Avan\c cats and Departament de Matem\`atiques}\\[-8pt]
\small{Universitat Aut\`onoma de Barcelona, E-08193 Bellaterra,
  Spain}\\[-4pt]
\small{\textit{On leave from:} Department of Mathematics, Imperial College London, London SW7
2AZ, UK.}\\[-4pt]
\small{Email: \texttt{carrillo@mat.uab.es}}\\
\vspace{5pt}\small{$^{3}$ Department of Mathematics}\\[-8pt]
\small{UC Santa Cruz, Santa Cruz, CA 95064, USA}\\[-4pt]
\small{Email: \texttt{schonbek@ucsc.edu}}
}

\date{June 19, 2011}

\maketitle

\begin{abstract}
  We analyse qualitative properties of the solutions to a mean-field
  equation for particles interacting through a pairwise potential
  while diffusing by Brownian motion. Interaction and diffusion
  compete with each other depending on the character of the
  potential. We provide sufficient conditions on the relation between
  the interaction potential and the initial data for diffusion to be
  the dominant term. We give decay rates of Sobolev norms showing that
  asymptotically for large times the behavior is then given by the
  heat equation. Moreover, we show an optimal rate of convergence in the
  $L^1$-norm towards the fundamental solution of the heat equation.
\end{abstract}

\medskip

\centerline{Mathematics subject classification numbers: 35B35,
35Q30, 76D05}


\section{Introduction}

In this paper we consider the diffusive aggregation equations,
\begin{subequations}
  \label{AE}
  \begin{align}
    \label{AE1}
    &\partial_t \rho
    = \nabla\cdot( \rho (\nabla W \ast\rho)) +\Delta \rho
    \\
    &\rho(0,x) =\rho_0,
    \label{AE2}
  \end{align}
\end{subequations}
where $\rho = \rho(t,x)$ is a real function depending on time $t \geq
0$ and space $x \in \R^N$, $W:\R^N\longrightarrow \R$ is an
\emph{interaction potential} verifying $W(x) = W(-x)$ (without loss of
generality, see \cite{CMV}). These equations have received a lot of
attention in the recent years because of their ubiquity in different
models and areas of applied and pure mathematics. Collective behavior
of animals (swarming), chemotaxis models, and granular media models
are some examples, see
\cite{ME,MCO,BDP,BenedettoCagliotiCarrilloPulvirenti98,CMV} and the
references therein. On the other hand, these equations have been
studied in connection to entropy-entropy dissipation techniques,
optimal transport, and gradient flows with respect to probability
measure distances, see \cite{CMV,AmbrosioGigliSavare02p} and the
references therein.

Without diffusion, the continuity equation \eqref{AE1} with a
singular interaction potential $W$ can lead to very involved
dynamics where blow-up can occur, and where Dirac Delta
singularities and smooth parts of the solution can coexist. More
precisely, assume that we have an interaction potential which is
radial, smooth away from the origin and whose gradient may be
singular at the origin, with a local behavior not worse than that
of $|x|$ (i.e., having at worst a Lipschitz singularity at the
origin); then, blow-up in finite time of $L^1 \cap L^\infty$
solutions was reported in \cite{BCL}. In fact, the almost sharp
condition which determines the behavior of global existence or
blow-up in $L^1 \cap L^\infty$ is the so-called \emph{Osgood
condition}. It is given in terms of the size of the gradient of
the potential $W$, which is said to satisfy the Osgood condition
if
\begin{equation}
  \label{eq:osgood}
  \int_0^1 \frac{1}{k'(r)} dr = +\infty
\end{equation}
with $W(x)=k(|x|)$. Specifically if \eqref{eq:osgood} is
satisfied, with some mild additional monotonicity conditions on
$k''$, and also $\nabla W\in W^{1,q}$ with $q<N$ and $ \rho_0 \in
L^1 \cap L^p$ with $p > N/(N-1)$, $N\geq 2$, then there are global
weak solutions, see \cite{BLR}. Aggregation (blow-up) only happens
when $t=\infty$, see \cite{BCL,BLR} for the $L^1$-$L^\infty$ and
$L^1$-$L^p$ results, respectively. In fact, weak measure solutions
were proved to exist after the $L^\infty$-blow-up time in a unique
way for certain attractive non-Osgood potentials, see \cite{CDFLS}
for a definition of weak measure solution and further details. In
\cite{CDFLS}, the authors also illustrate the existence of weak
measure solutions with very complicated patterns and Dirac Delta
formations.

In this work we address the following issue: under which
conditions on the interaction potential can linear diffusion prevail,
leading to a diffusive-dominated behavior for large times? More
precisely, we give sufficient conditions on the interaction
potential and the initial data such that the competition between the
possible aggregation due to an attractive interaction potential and
the linear diffusion in \eqref{AE1} is won by the latter.

Sharp conditions for separating global existence of solutions from
blow-up of solutions to \eqref{AE} have been given in several
papers related to homogeneous interaction potentials or to the
classical Keller-Segel model \cite{BDP,BCL2,BRB}. Also, conditions
for global existence or blow-up have been given in \cite{KS2}
based on the $L^p$ regularity of the gradient $\nabla W$ of the
potential. Blow-up conditions have been studied in detail as well
for fractional diffusions \cite{LR1,LR2}. However, we are not
aware of many results dealing with the asymptotic behavior once
the diffusion dominates over the aggregation except for
\cite{BDE,B} where the authors show that the solutions of the
Keller-Segel model behave like the solution of the heat equation
for small mass and the recent papers \cite{KS1,KS2} in which they
deal precisely with this issue for the equation \eqref{AE}; we
comment further on these works below.

Here we show that under suitable smallness conditions involving both
the interaction potential and the initial data, see Theorem
\ref{th:L2decay}, the behavior of the solution is determined by the
heat equation for large times (see Theorems \ref{th:L2decay},
\ref{thm:Hm-decay} and \ref{th:asymptotic-main}). In other words, we
get a result of asymptotic simplification for all dimensions under
some size conditions where the nonlinearity disappears and the decay
rates and behavior are like the diffusive equation at least up to
first order. We remark that the asymptotic simplification result in
$L^1$ without rate and the decay rates in $L^p$ were obtained in the
one-dimensional case in \cite{KS1} under some smallness condition
similar to one of the possibilities in Theorem \ref{th:L2decay} below
by using scaling arguments. Also, global existence results were
reported in the multidimensional case in \cite{KS2} but no
uniform-in-time $L^\infty$ bounds nor decay rates under smallness size
conditions were proved.

Asymptotic simplification results have been reported for problems
in fluid mechanics \cite{AS,ORS,SS,GW}, in convection-diffusion
equations \cite{CF}, and in some nonlinear diffusion models of
Keller-Segel type for small mass \cite{B,LS}. Here, we use the
technique of Fourier splitting \cite{S}, a technique quite
successful for the $N \geq 3$ dimensional Navier-Stokes equations,
together with direct estimates over the bilinear integral term
associated to \eqref{AE1} via Duhamel's formula to get the optimal
decay rates in Sobolev and $L^p$ spaces in section 3. Section 2 is
devoted to setting the basic well-posedness theory of
global-in-time solutions with uniform-in-time $L^\infty$
estimates. Finally, Section 4 is devoted to combining these time
decay estimates with entropy-entropy dissipation arguments
\cite{To,To99,AMTU,CT0,CT,CF,CMV,B} to obtain decay rates in
entropy in self-similar variables and in $L^1$ towards the
self-similar heat kernel for large times.


\section{Well-posedness and global bounds}

\subsection{Notation and preliminaries}
\label{sec:notation}

We usually omit the variables of the unknown $\rho$ in eq. \eqref{AE},
which are understood to be $(t,x)$. Also, we usually write $\rho_t(x)
= \rho(t,x)$, which is useful when referring to the function $x
\mapsto \rho(t,x)$ for a given time $t$ (we emphasize that $\rho_t$ is
not to be confused with $\partial_t \rho$; subindex notation for
partial derivatives is never used in this paper). When not specified,
integrals are over all of $\R^N$, and in the variable $x$.

We use the standard multi-index notation for derivatives throughout:
for a function $f:\R^N\to \C$ and a multi-index
$\gamma=(\gamma_1,\gamma_2,...,\gamma_N)$, with integers $\gamma_j
\geq 0$, we denote $|\gamma| = \sum_{j=1}^N \gamma_j$ and define
\[
\partial^\gamma f
= \partial_{x_1}^{ \gamma_1}\partial_{x_2}^{ \gamma_2}
\dots \partial_{x_N}^{ \gamma_N} f.
\]
We let $\Ss = \Ss(\R^N)$ be the usual Schwartz space of rapidly
decreasing functions. The Fourier and inverse Fourier transform of
$f\in \Ss$ are defined by
\begin{equation*}
  \hat f(\xi) = (2\pi)^{-N/2}\int_{\R^N} e^{- i x\cdot\xi} v(x) \, dx
  \qquad\mbox{and}\qquad
  \check{f}(x) = (2\pi)^{-N/2}\int_{\R^N}e^{ i x\cdot\xi}f(\xi)\, d\xi,
\end{equation*}
respectively, and extended as usual to $\mathcal{S}'$. If $k$ is a
nonnegative integer, $\mathcal{W}^{k,p}(\R^N) = \mathcal{W}^{k,p}$
will signify, as is standard, the Sobolev space consisting of
functions in $L^p(\R^{N})$ whose generalized derivatives up to order
$k$ belong to $L^p(\R^N) = L^p $, $1\leq p \leq \infty$, with norm
$\|\cdot\|_{k,p}$ defined by
\begin{equation}
  \label{eq:Wkp-norm}
  \| f \|_{k,p}^p := \sum_{|\gamma| \leq k} \|\partial^\gamma f\|_{p}^p
\end{equation}
for any $f \in \mathcal{W}^{k,p}$, where the sum is over all
multi-indices $\gamma$ with $|\gamma| \leq k$. When $p=2$,
$\mathcal{W}^{k,2}(\R^N)=H^k(\R^N) = H^k$, where the space $H^k$ is
defined for all $k\in \R$ as the space of all $f\in \mathcal{S}'$ such
that $(1+|\xi|^2)^{k/2}\hat f(\xi)\in L^2$. The norm in $H^k$ (that of
$\mathcal{W}^{k,2}$ defined in \eqref{eq:Wkp-norm}) is sometimes
denoted by $\|\cdot\|_{H^k}$ instead of $\|\cdot\|_{k,2}$. We also
use the notation
\begin{equation}
  \label{eq:Wkp-seminorm}
  \| \mathcal{D}^k f \|_p
  :=
  \Big( \sum_{|\gamma| = k} \|\partial^\gamma f\|_{p}^p \Big)^{\frac{1}{p}}.
\end{equation}
The space of bounded continuous functions from an interval $I$ to a
normed space $X$ is denoted by $BC(I,X)$. To simplify the notation,
the domain $\R^N$ of the above spaces will usually be omitted.

We will use the well-known fact that for any $m \geq 0$ there
is a constant $C$ depending only on $m$ and the dimension $N$ such
that
\begin{equation}
  \label{eq:Hm-norm-of-product}
  \| f g \|_{m,p} \leq C \big(
    \|f\|_{m,p} \|g\|_\infty +
    \|f\|_\infty \|g\|_{m,p}
  \big),
\end{equation}
for functions $f,g \in \mathcal{W}^{m,p} \cap L^\infty$.
We also define $D^m\rho$ by $\widehat{D^m \rho} =|\xi|^m
\widehat{\rho}$. Hence, by Plancherel's identity $\|D^m \rho\|_2 =
\||\xi|^m \widehat{\rho}\|_2$. By $C$ we denote arbitrary constants
that can change from line to line.

We recall the Ga\-gliar\-do-Ni\-ren\-berg-So\-bo\-lev (GNS)
inequalities, which we will be using repeatedly in the sequel, see
\cite[Theorem 9.3]{FriedBook} for a proof: given $1 \leq q,s \leq
\infty$ and integers $0\leq j < m$, there exists a number $C > 0$
depending on $q$, $s$, $j$, $m$ and the dimension $N$ such that
\begin{equation}\label{eq:GNS-Friedman}
  \| \mathcal{D}^j v \|_{p}
  \leq
  C\,\| \mathcal{D}^m v \|_{q}^\theta
  \, \| v \|_s^{1-\theta}.
\end{equation}
where we use the notation \eqref{eq:Wkp-seminorm} with
\begin{equation*}
   \frac{1}{p}
   =
   \frac{j}{N}
   + \theta \left( \frac{1}{q} - \frac{m}{N} \right)
   + (1-\theta) \frac{1}{s},
\end{equation*}
where $\frac{j}{m} \leq \theta \leq 1$, with the following
exception: if $m-j-N/q$ is a nonnegative integer, then the GNS
inequality \eqref{eq:GNS-Friedman} is only valid for $\frac{j}{m}
\leq \theta < 1$. Given that the seminorms $\|D^m v\|_p$ and
$\|\mathcal{D}^m v\|_p$ are equivalent for $1 < p < +\infty$, we
will often interchange them.

We will make use of the standard heat semigroup $e^{t\Delta}$,
which is defined as the convolution in the $x$ variable with the
heat kernel
\begin{equation}
  \label{eq:heat-kernel}
  G(t,x) := \frac{1}{(4 \pi t)^{N/2}} e^{-\frac{|x|^2}{4t}}\, ,
\end{equation}
whose derivatives satisfy for all integers $m\geq 1$ and any
multi-index $\gamma$ with $|\gamma| = m$
\begin{equation}\label{eq:heat-kernel-dx-bound}
  |\partial^\gamma G(t,x)| \leq
  C \,t^{-\frac{N+m}{2}}
  e^{-\frac{|x|^2}{8t}}
\end{equation}
for some constant $C > 0$ depending on the dimension $N$. From
\eqref{eq:heat-kernel-dx-bound}, we get the following standard
estimates:
  \begin{equation}
    \label{eq:norm-heat-gradient}
    \| \nabla G(t,\cdot) \|_r
    \leq
    C \, t^{-\frac{1}{2} (1+N(1-\theta))}
  \end{equation}
with $\theta = \frac{1}{r}$ and $C=C(N)$ and $1\leq r\leq \infty$.

\subsection{Local-in-time existence}

In order to show short-time existence of solutions to the equation
\eqref{AE} we use a common fixed-point iteration as also done in
\cite{KS2}. Some elements in the proofs of subsections 2.2 and 2.3 are
related to results reported in \cite{KS2} but we prefer to include
them for the sake of the reader. One can formally rewrite \eqref{AE}
by using Duhamel's formula and integrating by parts:
\begin{equation}
  \label{eq:Duhamel}
  \rho_t = e^{t \Delta} \rho_0
  - \int_0^t \nabla e^{(t-s) \Delta}
  \big( \rho_s (\nabla W * \rho_s) \big)
  \,ds,
\end{equation}
where $\nabla e^{t\Delta}$ denotes the convolution in $x$ with the
gradient of the heat kernel \eqref{eq:heat-kernel}. As it is common,
we define a mild solution of \eqref{AE} as one that has some
reasonable regularity for \eqref{eq:Duhamel} to make sense, and
satisfies \eqref{eq:Duhamel}. For this definition, and for the
short-time existence result \ref{th:short-time}, we do not assume that
$\rho$ is nonnegative, as it is not needed for the argument. In the
sequel we  work in any   dimension $N \geq 1$.
\begin{defi}
  Take $T \in (0,+\infty]$, $p \in [1,+\infty]$, and $\rho_0 \in
  L^p$. Assume that $\nabla W \in (L^q)^N$. A \emph{mild $L^p$
    solution} to equation \eqref{AE} on $[0,T)$ with initial condition
  $\rho_0$ is a function $\rho \in L^1_{loc}([0,T), L^p)$ such that
  \eqref{eq:Duhamel} holds for all $t \in (0,T)$.
\end{defi}

\begin{Remark}
  Note that $\nabla W * \rho \in L^1_{loc}([0,T), (L^\infty)^N)$, so if $\rho  \in L^p$ the
  product $\rho (\nabla W * \rho)$ is in $\in L^1_{loc}([0,T),
  (L^p)^N)$ and the integral in \eqref{eq:Duhamel} makes sense.
\end{Remark}

\begin{Remark}
  \label{rmk1}
  It is easy to see that whenever a mild solution $\rho$ has enough
  regularity to be a classical solution, it is then in fact a
  classical solution. This is: if a mild solution $\rho$ has
  continuous first-order time derivatives and continuous second-order
  space derivatives, then it is a classical solution.
\end{Remark}

We will show the following short-time existence theorem:
\begin{thm}[Short-time existence]
  \label{th:short-time}
  Take $p \in [1,+\infty]$, $m \geq 0$, and $\rho_0 \in
  \mathcal{W}^{m,p}$. Assume that $\nabla W \in (L^q)^N$, with
  $\frac{1}{p}+\frac{1}{q}=1$. Then there exists a maximal time $T^*
  \in (0,+\infty]$ and a unique mild solution $\rho \in
  \mathcal{C}([0,T), \mathcal{W}^{m,p})$ of problem \eqref{AE}. If
  $T^* < +\infty$ then
  \begin{equation}
    \label{eq:Wmp-blowup}
    \| \rho_t \|_{m,p} \to +\infty
    \quad \text{ as } t \to T*.
  \end{equation}

  Let $p' \in [0,+\infty]$, $m' \geq 0$. If additionally $\rho_0
  \in \mathcal{W}^{m',p'}$ and $\nabla W \in (L^{q'})^N$ with
  $\frac{1}{p'}+\frac{1}{q'}=1$, then the solution given above belongs to
  $BC([0,T), \mathcal{W}^{m,p} \cap \mathcal{W}^{m',p'})$.
\end{thm}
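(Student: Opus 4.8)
The plan is to solve the Duhamel equation \eqref{eq:Duhamel} by a contraction-mapping argument and then patch and continue. For $T>0$ let $X_T:=\mathcal{C}([0,T],\mathcal{W}^{m,p})$ with the supremum-in-time norm, and set
\[
\Phi[\rho]_t:=e^{t\Delta}\rho_0-\int_0^t\nabla e^{(t-s)\Delta}\big(\rho_s(\nabla W*\rho_s)\big)\,ds.
\]
Two facts drive the estimates. First, since $\partial^\gamma$ commutes with $e^{\tau\Delta}$ and with $\nabla$, and $\|G(\tau,\cdot)\|_1=1$ while $\|\nabla G(\tau,\cdot)\|_1\le C\tau^{-1/2}$ by \eqref{eq:norm-heat-gradient} with $r=1$, one has $\|e^{\tau\Delta}f\|_{m,p}\le\|f\|_{m,p}$ and $\|\nabla e^{\tau\Delta}g\|_{m,p}\le C\tau^{-1/2}\|g\|_{m,p}$. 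Second, the Leibniz rule together with $\|\partial^\gamma(\nabla W*\rho)\|_\infty=\|\nabla W*\partial^\gamma\rho\|_\infty\le\|\nabla W\|_q\|\partial^\gamma\rho\|_p$ for $|\gamma|\le m$ — the only point where $1/p+1/q=1$ is used, via Young's inequality — shows $\nabla W*\rho\in\mathcal{W}^{m,\infty}$ with $\|\nabla W*\rho\|_{m,\infty}\le\|\nabla W\|_q\|\rho\|_{m,p}$, and hence $\|\rho(\nabla W*\rho)\|_{m,p}\le C\|\nabla W\|_q\|\rho\|_{m,p}^2$. Combining, $\|\Phi[\rho]\|_{X_T}\le\|\rho_0\|_{m,p}+C\|\nabla W\|_q\sqrt{T}\,\|\rho\|_{X_T}^2$, and, writing $\rho(\nabla W*\rho)-\sigma(\nabla W*\sigma)=(\rho-\sigma)(\nabla W*\rho)+\sigma(\nabla W*(\rho-\sigma))$, the same bilinear bound gives $\|\Phi[\rho]-\Phi[\sigma]\|_{X_T}\le C\|\nabla W\|_q\sqrt{T}(\|\rho\|_{X_T}+\|\sigma\|_{X_T})\|\rho-\sigma\|_{X_T}$. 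Taking $R=2\|\rho_0\|_{m,p}$ and $T$ small enough that $C\|\nabla W\|_q\sqrt{T}\,R\le\tfrac12$, $\Phi$ maps the closed ball of radius $R$ in $X_T$ into itself and is a contraction there; its fixed point is a mild solution on $[0,T]$, and $T$ depends only on $\|\rho_0\|_{m,p}$ and $\|\nabla W\|_q$.

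Uniqueness on any common interval follows from the contraction estimate above, so the local solutions patch into a solution on a maximal interval $[0,T^*)$ with $\rho\in\mathcal{C}([0,T^*),\mathcal{W}^{m,p})$; continuity in $t$ uses strong continuity of $e^{t\Delta}$ on $\mathcal{W}^{m,p}$ and continuity in $t$ of the Duhamel integral (in the borderline case $p=\infty$, $m=0$ one only gets weak-$*$ continuity at $t=0$ unless $\rho_0$ is uniformly continuous — a standard caveat). For the blow-up alternative \eqref{eq:Wmp-blowup}: if $T^*<\infty$ but $\liminf_{t\to T^*}\|\rho_t\|_{m,p}=:L<\infty$, pick $t_0<T^*$ with $\|\rho_{t_0}\|_{m,p}\le L+1$; restarting the fixed point at $t_0$ gives a solution on $[t_0,t_0+\delta)$ with $\delta=\delta(L,\|\nabla W\|_q)>0$ independent of $t_0$, and for $t_0$ close enough to $T^*$ this extends $\rho$ beyond $T^*$, a contradiction.

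For the persistence statement I would propagate the additional integrability and regularity in stages, each stage arranged so that the governing integral inequality is \emph{linear} in the highest-order norm present, a singular Grönwall lemma then forbidding blow-up before $T^*$. Since $\sup_{[0,T]}\|\rho_t\|_{m,p}<\infty$ for every $T<T^*$, in particular $M_1:=\sup_{[0,T]}\|\rho_t\|_p<\infty$, the estimate $\|\rho_s(\nabla W*\rho_s)\|_{p'}\le\|\rho_s\|_{p'}\|\nabla W*\rho_s\|_\infty\le\|\nabla W\|_q M_1\|\rho_s\|_{p'}$ combined with \eqref{eq:Duhamel} yields a linear singular integral inequality for $\|\rho_t\|_{p'}$, hence $\rho\in\mathcal{C}([0,T^*),L^{p'})$ (equivalently, one re-runs the iteration with the extra $L^{p'}$ control, whose step size depends only on $M_1$). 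Thus $\rho$ is bounded in $L^p\cap L^{p'}$ on compact time intervals. To reach $\mathcal{W}^{m',p'}$, expand $\partial^\gamma\big(\rho(\nabla W*\rho)\big)$, $|\gamma|\le m'$, by Leibniz: the two top-order terms are bounded linearly in $\|\rho\|_{m',p'}$ using the already-established $L^p$- and $L^{p'}$-bounds on $\rho$ (and $\nabla W\in L^q\cap L^{q'}$), while each intermediate term is handled by the GNS inequalities \eqref{eq:GNS-Friedman} and Young's inequality, the interpolation exponents summing to at most one so that these too contribute at most one power of $\|\mathcal{D}^{m'}\rho\|_{p'}$. A singular Grönwall argument then gives $\rho\in BC([0,T^*),\mathcal{W}^{m,p}\cap\mathcal{W}^{m',p'})$.

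The heat-kernel bounds and the fixed-point step are routine. The main obstacle is the persistence statement: one must organise the nonlinear estimate so that the top-order Sobolev norm enters \emph{linearly} — a direct use of \eqref{eq:Hm-norm-of-product} only gives a quadratic bound, which would merely reproduce local existence instead of excluding blow-up before the $\mathcal{W}^{m,p}$-blow-up time — and one must track the harmless failure of continuity at $t=0$ in the endpoint case $p=\infty$, $m=0$.
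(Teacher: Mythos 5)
Your local existence argument is essentially the paper's: the same Duhamel formulation \eqref{eq:Duhamel}, the same kernel bound $\|\nabla G(\tau,\cdot)\|_1\le C\tau^{-1/2}$, and the same product/Young estimate $\|\rho(\nabla W*\rho)\|_{m,p}\le C\|\nabla W\|_q\|\rho\|_{m,p}^2$ (valid since $1/p+1/q=1$), giving a bound quadratic in the norm with a factor $\sqrt T$. The only cosmetic difference is that you run a direct contraction on a ball where the paper invokes the abstract bilinear fixed-point lemma (Lemma \ref{lm:FPT}), and your restart argument for the blow-up alternative \eqref{eq:Wmp-blowup} is exactly the ``standard argument'' the paper alludes to. Your caveat about strong continuity at $t=0$ when $p=\infty$ is fair, but it applies equally to the paper's formulation and is not a defect of your proof relative to it.

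The divergence is in the last assertion. The paper proves it by simply re-running the same quadratic fixed point in the space $X=BC([0,T),\mathcal{W}^{m,p}\cap\mathcal{W}^{m',p'})$: the claim is local in time (note that boundedness on all of $[0,T^*)$ would be incompatible with \eqref{eq:Wmp-blowup} when $T^*<\infty$), and your own estimates, applied verbatim to the intersection norm, already close it. You instead dismiss the quadratic route and aim at the stronger claim that the extra $\mathcal{W}^{m',p'}$ regularity propagates up to the $\mathcal{W}^{m,p}$ maximal time $T^*$, via estimates linear in the top-order norm plus a singular Gr\"onwall lemma. For $m'=0$ this works as you say, since $\|\rho_s(\nabla W*\rho_s)\|_{p'}\le \|\nabla W\|_q M_1\|\rho_s\|_{p'}$ with $M_1=\sup_{[0,T]}\|\rho_t\|_p$. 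For $m'\ge 1$, however, your sketch has a genuine gap: in the Leibniz expansion the intermediate terms $\partial^\beta\rho\,(\nabla W*\partial^{\gamma-\beta}\rho)$ with $0<|\beta|<|\gamma|\le m'$ require control of derivatives of $\rho$ of intermediate order in $L^p$- or $L^{p'}$-based norms, while for $m'>m$ the only quantities available on compact time intervals are $\|\rho\|_{m,p}$ and (inductively) lower-order $\mathcal{W}^{k,p'}$ norms; your assertion that the GNS interpolation \eqref{eq:GNS-Friedman} always closes ``with exponents summing to at most one'' is not checked, and it is delicate precisely at the endpoint exponents $p,p'\in\{1,\infty\}$ allowed by the theorem (where the GNS inequality has exceptional cases) and with $\nabla W$ only in $L^q\cap L^{q'}$. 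None of this extra machinery is needed for the statement as written: the quadratic estimate you already derived, run in the intersection space, is the paper's (and the shortest) proof of the final part.
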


This result will follow from a standard fixed point theorem for
bilinear forms which  for completeness we state here, see \cite{C}:

\begin{Lemma}
  \label{lm:FPT}
  Let X be an abstract Banach space with norm $\|\cdot\|_X$ and $B : X
  \times X \to X $ a bilinear operator such that for any $x_1, x_2 \in
  X$,
  \begin{equation}
    \label{bilinear0}
    \|B(x_1,x_2)\|_X \leq \eta \|x_1\|_X\|x_2\|_X
  \end{equation}
  then for any  $y \in X$ such that
  \begin{equation}
    \label{data}
    4 \eta \|y\|_X < 1
  \end{equation}
  the equation $x = y + B(x, x)$ has a solution $x \in X$. In
  particular the solution satisfies $\|x\|_X \leq 2\|y\|_X$ and is
  the only one such that $\|x\|_X < \frac{1}{2\eta}$.
\end{Lemma}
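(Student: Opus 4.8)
\textbf{Proof proposal for Lemma \ref{lm:FPT} (abstract bilinear fixed point).}

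The plan is to construct the solution as the limit of the Picard iteration sequence $x_0 := y$ and $x_{n+1} := y + B(x_n, x_n)$, and to show this sequence is well-defined, stays in a ball, and is Cauchy. First I would establish by induction that $\|x_n\|_X \leq 2\|y\|_X$ for all $n$: assuming $\|x_n\|_X \leq 2\|y\|_X$, the bilinear bound \eqref{bilinear0} gives
\[
  \|x_{n+1}\|_X \leq \|y\|_X + \eta \|x_n\|_X^2 \leq \|y\|_X + 4\eta \|y\|_X^2 = \|y\|_X (1 + 4\eta\|y\|_X) \leq 2\|y\|_X,
\]
using the smallness hypothesis \eqref{data}. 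So the whole sequence lives in the closed ball of radius $2\|y\|_X$.

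Next I would show the sequence is Cauchy. Writing $x_{n+1} - x_n = B(x_n, x_n) - B(x_{n-1}, x_{n-1})$ and using bilinearity, $B(x_n,x_n) - B(x_{n-1},x_{n-1}) = B(x_n - x_{n-1}, x_n) + B(x_{n-1}, x_n - x_{n-1})$, so
\[
  \|x_{n+1} - x_n\|_X \leq \eta \big( \|x_n\|_X + \|x_{n-1}\|_X \big) \|x_n - x_{n-1}\|_X \leq 4\eta \|y\|_X \, \|x_n - x_{n-1}\|_X.
\]
Since $4\eta\|y\|_X < 1$ by \eqref{data}, this is a contraction estimate with ratio $r := 4\eta\|y\|_X < 1$, so $\|x_{n+1} - x_n\|_X \leq r^n \|x_1 - x_0\|_X$ and the sequence is Cauchy in the Banach space $X$; let $x$ be its limit. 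Since $\|x_n\|_X \leq 2\|y\|_X$ is preserved in the limit, $\|x\|_X \leq 2\|y\|_X$. Passing to the limit in $x_{n+1} = y + B(x_n,x_n)$ — which is legitimate because $B$ is continuous, being bounded and bilinear — yields $x = y + B(x,x)$, so $x$ solves the equation.

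Finally I would prove the uniqueness claim. Suppose $x$ and $\tilde x$ both satisfy the equation with $\|x\|_X, \|\tilde x\|_X < \frac{1}{2\eta}$. Then $x - \tilde x = B(x,x) - B(\tilde x, \tilde x) = B(x - \tilde x, x) + B(\tilde x, x - \tilde x)$, hence
\[
  \|x - \tilde x\|_X \leq \eta\big( \|x\|_X + \|\tilde x\|_X \big) \|x - \tilde x\|_X < \eta \cdot \frac{1}{\eta} \|x - \tilde x\|_X = \|x - \tilde x\|_X,
\]
which forces $\|x - \tilde x\|_X = 0$. Since the solution constructed above satisfies $\|x\|_X \leq 2\|y\|_X < \frac{1}{2\eta}$ by \eqref{data}, it is the unique solution in that ball. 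There is no serious obstacle here; the only point requiring a little care is making sure the various inequalities are strict where needed (so that uniqueness gives a genuine contradiction rather than an equality) and invoking the smallness hypothesis \eqref{data} at exactly the right places to close both the invariant-ball induction and the contraction estimate.
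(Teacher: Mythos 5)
Your proof is correct: the invariant-ball induction, the contraction estimate with ratio $4\eta\|y\|_X<1$, and the uniqueness argument via bilinearity are all sound, and the constructed solution indeed lies in the ball $\|x\|_X\leq 2\|y\|_X<\frac{1}{2\eta}$. The paper itself states this lemma without proof, citing \cite{C}, and the standard argument given there is essentially the same Picard-iteration/contraction scheme you use, so there is nothing to add.
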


\begin{proof}[Proof of Theorem \ref{th:short-time}]
  Take $T > 0$. Following a standard strategy, we first show that for
  $T$ small enough there exists a mild solution on $[0,T)$. In order to find
  a function $\rho$ satisfying \eqref{eq:Duhamel} we choose the
  bilinear form defined as
  \begin{equation} \label{bilinear}
    \Bb(\rho,\psi)
    =
    - \int_0^t \nabla e^{(t-s)\Delta}
    \cdot (\nabla W \ast \psi_s) \rho_s \,ds
    \quad \text{ for } t \in [0,T).
  \end{equation}
  In order to apply Lemma \ref{lm:FPT} estimate this bilinear form in
  the space $X := BC([0,T), \mathcal{W}^{m,p})$, with norm given by
  \begin{equation*}
    \tnm \rho \tnm_{m,p} := \sup_{t \in [0,T)} \|\rho_t\|_{m,p}.
  \end{equation*}
  In this proof we denote by $C_{m}$ a number that depends only on
  $m$, which may change from line to line. If $\gamma$ is a
  multi-index with $|\gamma| \leq m$, then, for $\rho, \psi \in
  BC([0,T), \mathcal{W}^{1,p})$, and any $t \in [0,T)$, using
  \eqref{eq:norm-heat-gradient}, we get
  \begin{align}
    \| \partial^\gamma \Bb(\rho, \psi) \|_{p}
    &\leq
    \int_0^t \Big\| \nabla e^{(t-s)\Delta} \cdot
    \partial^\gamma \big( (\nabla W \ast \psi_s) \rho_s \big)
    \Big\|_p \,ds\leq C
    \int_0^t \frac{1}{\sqrt{t-s}}
    \Big\| \partial^\gamma \big( (\nabla W \ast \psi_s) \rho_s \big)
    \Big\|_p \,ds\nonumber
    \\
    &\leq C_{m}
    \int_0^t \frac{1}{\sqrt{t-s}}
    \big\| \nabla W * \psi_s \big\|_{m,\infty}
    \, \big\| \rho_s \big\|_{m,p}
    \,ds\nonumber
    \\
    &\leq C_{m}
    \int_0^t \frac{1}{\sqrt{t-s}}
    \|\nabla W\|_q \|\psi_s\|_{m,p}
    \, \| \rho_s \|_{m,p}
    \,ds,     \label{eq:W1p-estimate}
  \end{align}
  which shows, taking the supremum on $[0,T)$ and summing over all
  multi-indices with $|\gamma| \leq m$, that
  \begin{equation*}
    \tnm \Bb(\rho,\psi) \tnm_{m,p}
    \leq
    C_{m} \sqrt{T} \,
    \|\nabla W\|_q \,
    \tnm \rho_s \tnm_{m,p}
    \, \tnm \psi_s \tnm_{m,p}.
  \end{equation*}
  In the third inequality in \eqref{eq:W1p-estimate} we have used
  that, for $f \in \mathcal{W}^{m,p}$, $g \in \mathcal{W}^{m,\infty}$,
  \begin{equation}
    \label{eq:bilinear-bound}
    \| \partial^\gamma (f g) \|_p
    \leq
    C_{m} \| f \|_{m,p} \, \| g \|_{m,\infty},
  \end{equation}
  for any multi-index with $|\gamma| \leq m$. This can be easily seen
  by writing out and estimating the derivatives of the product
  $fg$. This gives the estimate \eqref{bilinear0} with $\eta = C_{m}
  \sqrt{T} \, \|\nabla W\|_q$. Taking $y \in BC([0,T),
  \mathcal{W}^{m,p})$ defined by $t \mapsto e^{t\Delta} \rho_0$ in
  Lemma \ref{lm:FPT}, we can choose $T$ small enough so that
  \eqref{data} is satisfied. Hence Lemma \ref{lm:FPT}, yields the
  existence of a function $\rho \in BC([0,T), \mathcal{W}^{1,p})$
  satisfying \eqref{eq:Duhamel}, i.e., a mild solution. This solution
  is a priori unique only in the set of solutions satisfying $\tnm
  \rho \tnm_{m,p} \leq 1/(2 \eta)$, but a standard argument using the
  continuity of $\rho$ shows that it is in fact the unique mild
  solution in $BC([0,T), \mathcal{W}^{1,p})$.

  The existence of a maximal time $T^*$ and the blow-up of the
  solution at $T^*$ if $T^* < +\infty$ follows now from a standard
  argument. The last part of Theorem \ref{th:short-time} is obtained
  by an analogous reasoning, considering now the space $X := BC([0,T),
  \mathcal{W}^{m,p} \cap \mathcal{W}^{m',p'})$.
\end{proof}

\subsection{ A priori time-dependent bounds and global existence}

In this section we obtain $L^p$ bounds for our solutions. Due to
equation \eqref{eq:Wmp-blowup} in Theorem \ref{th:short-time} these
bounds imply global existence of the solutions.

\begin{Proposition}
  \label{pr:Lp-bounds} \
  \begin{itemize}
  \item[i)] Let $\rho_0 \in L^1 \cap L^p$ with $\rho_0 \geq 0$, and
    $\nabla W \in (L^q \cap L^{\infty})^N$ with $1/p + 1/q = 1$. Let
    $\rho \in \mathcal{C}([0,T); L^1 \cap L^p)$ be a mild solution to
    \eqref{AE} on $[0,T)$ as obtained in Theorem
    {\rm\ref{th:short-time}} with initial data $\rho_0$. Then there is a
    constant $C \geq 0$ (depending only on $\rho_0$ and the dimension
    $N$) such that for all $t\in [0,T)$
    \begin{align}
      \label{L1}
      &\|\rho_t\|_1 = \|\rho_0\|_1 =: M,
      \\
      \label{lotro}
      &\|\rho_t\|_{p}
      \leq
      \|\rho_0\|_{p}
      \exp \left(
        C \|\nabla W\|_{\infty} \|\rho_0\|_1
        \sqrt{t}
      \right)
      := C_p(t) \, \|\rho_0\|_{p}.
    \end{align}
  \item[ii)] Assume that $\rho_0 \in \mathcal{W}^{m,2} \cap L^1 \cap L^\infty
    $ for some $m \geq 1$, and $\nabla W \in (L^1 \cap
    L^\infty)^N$. Then there is a time-dependent function $C_{m,p}(t)$,
    bounded on finite time intervals, depending only on $\rho_0$,
    $\|\nabla W\|_1$, $\|\nabla W\|_\infty$ and the dimension $N$, such
    that for all $t \in [0,T)$,
    \begin{equation}
      \label{eq:Hm-bound}
      \|\rho_t\|_{H^m}
      \leq
      C_{m,p}(t) \, \|\rho_0\|_{H^m}.
    \end{equation}
  \end{itemize}
\end{Proposition}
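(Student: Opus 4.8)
For part (i), the $L^1$ conservation \eqref{L1} is immediate: integrating \eqref{AE1} in $x$, the diffusion term $\Delta\rho$ integrates to zero, and the interaction term $\nabla\cdot(\rho(\nabla W\ast\rho))$ is a divergence hence also integrates to zero (using that $\rho\in L^1$ and $\nabla W\ast\rho\in L^\infty$ provide enough decay); nonnegativity of $\rho$, which we would need to argue is preserved, lets us drop absolute values. For the $L^p$ bound \eqref{lotro}, I would work from Duhamel's formula \eqref{eq:Duhamel}. Using $\|e^{t\Delta}\rho_0\|_p\le\|\rho_0\|_p$ and the estimate \eqref{eq:norm-heat-gradient} for $\|\nabla G(t-s,\cdot)\|_1$ together with Young's inequality, one gets
\[
\|\rho_t\|_p \le \|\rho_0\|_p + C\int_0^t \frac{1}{\sqrt{t-s}}\,\|\nabla W\ast\rho_s\|_\infty\,\|\rho_s\|_p\,ds
\le \|\rho_0\|_p + C\|\nabla W\|_\infty\|\rho_0\|_1\int_0^t\frac{\|\rho_s\|_p}{\sqrt{t-s}}\,ds,
\]
where I used $\|\nabla W\ast\rho_s\|_\infty\le\|\nabla W\|_\infty\|\rho_s\|_1=\|\nabla W\|_\infty M$ from the conserved mass. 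The kernel $1/\sqrt{t-s}$ is a singular (but integrable) convolution kernel, so the plain Gronwall lemma does not apply directly; I would instead use a singular Gronwall inequality (or a two-step iteration exploiting that the square of the kernel is a Beta-type convolution) to conclude $\|\rho_t\|_p\le\|\rho_0\|_p\exp(C\|\nabla W\|_\infty M\sqrt{t})$. This handling of the weakly singular Gronwall kernel is the one genuinely non-mechanical point in part (i).

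For part (ii), I would run essentially the same scheme at the level of the homogeneous Sobolev seminorms. Applying $\mathcal{D}^m$ to \eqref{eq:Duhamel} and estimating as in \eqref{eq:W1p-estimate} but now putting one derivative of $G$ in $L^1$ and using \eqref{eq:Hm-norm-of-product} to distribute the $m$ derivatives over the product $\rho_s(\nabla W\ast\rho_s)$, I get a bound of the form
\[
\|\rho_t\|_{H^m}\le \|\rho_0\|_{H^m}+C\int_0^t\frac{1}{\sqrt{t-s}}\Big(\|\rho_s\|_{H^m}\|\nabla W\ast\rho_s\|_\infty+\|\rho_s\|_\infty\|\nabla W\ast\rho_s\|_{H^m}\Big)ds.
\]
Here $\|\nabla W\ast\rho_s\|_\infty\le\|\nabla W\|_\infty M$ again, $\|\nabla W\ast\rho_s\|_{H^m}\le\|\nabla W\|_1\|\rho_s\|_{H^m}$ (Young, placing $\nabla W$ in $L^1$), and $\|\rho_s\|_\infty$ is controlled on finite time intervals by part (i) together with \eqref{lotro} for $p$ large (or a direct $L^\infty$-Duhamel estimate). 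This closes the inequality in $\|\rho_s\|_{H^m}$ with a weakly singular kernel, and the same singular Gronwall argument yields \eqref{eq:Hm-bound} with $C_{m,p}(t)$ bounded on finite intervals; all constants manifestly depend only on $\rho_0$, $\|\nabla W\|_1$, $\|\nabla W\|_\infty$ and $N$.

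I expect the main obstacle to be purely technical: making the weakly singular Gronwall argument rigorous (the kernel $(t-s)^{-1/2}$ appears in every estimate and forces either an iteration of Duhamel's formula or an appeal to a Gronwall-type lemma with integrable singularity), and, secondarily, justifying the integration by parts in \eqref{L1} and the propagation of nonnegativity at the (possibly low) regularity of mild solutions. Neither presents a conceptual difficulty, but both require care about which function spaces the solution a priori lives in, which is why the statement restricts to $\nabla W\in L^1\cap L^\infty$ and $\rho_0$ in a sufficiently rich space.
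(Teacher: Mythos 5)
Your part~(i) is essentially the paper's argument: \eqref{L1} by integrating the divergence-form equation, then the $L^p$ bound via Duhamel, \eqref{eq:norm-heat-gradient} with $r=1$, Young's inequality and $\|\nabla W\ast\rho_s\|_\infty\le\|\nabla W\|_\infty\|\rho_0\|_1$, closed by a Gronwall lemma with the weakly singular kernel $(t-s)^{-1/2}$ --- the paper handles exactly this point with the modified Gronwall Lemma~\ref{modgron} (taken from \cite{SS}, $\delta=1/2$), which produces the $\exp(C\sqrt{t})$ growth in \eqref{lotro}; so the ``genuinely non-mechanical point'' you flag is resolved by an off-the-shelf lemma rather than an iteration. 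For part~(ii) your route is correct but genuinely different from the paper's. You stay at the level of the mild formulation: apply $m$ derivatives to \eqref{eq:Duhamel}, use \eqref{eq:Hm-norm-of-product} and $\|\nabla W\ast\rho_s\|_{H^m}\le\|\nabla W\|_1\|\rho_s\|_{H^m}$, control $\|\rho_s\|_\infty$ by the $p=\infty$ case of \eqref{lotro} (note this case applies directly, since $q=1$ and $\nabla W\in L^1\cap L^\infty$; ``\eqref{lotro} for $p$ large'' alone does not give $L^\infty$ without an extra limiting argument), freeze the locally bounded coefficient on a finite interval, and invoke the singular Gronwall lemma again. The paper instead runs an energy estimate: it differentiates $\|\partial^\gamma\rho\|_2^2$ in time as in \eqref{eq:Wm-1}, bounds the nonlinear term by Cauchy--Schwarz, \eqref{eq:Hm-norm-of-product} and Young's convolution inequality, absorbs half of the dissipation $\int|\nabla\partial^\gamma\rho|^2$, and closes with an ordinary Gronwall inequality whose coefficient involves $C_\infty(t)$ from \eqref{lotro}. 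The trade-off: the paper's energy route avoids any singular kernel at the $H^m$ level but, as the authors note, requires an approximation by smooth solutions to justify the formal manipulations on \eqref{AE}; your Duhamel route works directly at the regularity of mild solutions and needs no such approximation, at the price of reusing the singular Gronwall lemma with a time-dependent (locally bounded) coefficient. Both yield \eqref{eq:Hm-bound} with a $C_{m,p}(t)$ bounded on finite intervals and depending only on the stated quantities, so your proposal is sound.
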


\medskip

To prove this proposition we will use the following modified Gronwall
Lemma (see \cite{SS}):

\begin{Lemma} \label{modgron}
  Let $0\leq T \leq \infty,\,\delta\in(0,1)$,   and let $f:[0,T] \to
  [0,\infty)$ be continuous and satisfy
  \[
  f(t) \leq A +B\int_0^t (t-s)^{-\delta} f(s) \;ds
  \]
  for all $t\in[0,T)$. Then
  \[
  f(t) \leq A \Phi (B\Gamma(1-\delta)t^{1-\delta})
  \]
  for $t\in[0,T)$, where $\Phi :\C \to \C$ is defined by
  \[
  \Phi(z) = \sum_{n=0}^{\infty} \frac{z^n}{\Gamma(n(1-\delta)+1)}
  \]
\end{Lemma}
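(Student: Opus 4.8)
The plan is the classical Picard-type iteration for linear Volterra inequalities with a weakly singular kernel. Write $K$ for the linear operator $(Kg)(t) := B\int_0^t (t-s)^{-\delta} g(s)\,ds$, which is well defined on continuous functions since $\delta<1$ makes the kernel locally integrable, and which is monotone in the sense that $0\le g\le h$ implies $0\le Kg\le Kh$. The hypothesis reads $f\le A+Kf$ on $[0,T)$. Applying $K$ repeatedly and using $K(A)=A\,(K\mathbf 1)$ (with $\mathbf 1$ the constant function $1$) together with monotonicity and positivity, one obtains by induction on $n$ that
\[
f(t)\ \le\ A\sum_{k=0}^{n-1}(K^k\mathbf 1)(t)\ +\ (K^n f)(t),\qquad t\in[0,T),\ n\ge 1,
\]
with the convention $K^0\mathbf 1\equiv 1$. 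So the proof reduces to two computations: identifying $K^k\mathbf 1$, and showing that the remainder $K^n f$ vanishes as $n\to\infty$.

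For the first, I would use the Beta-integral identity $\int_0^t (t-s)^{\alpha-1}s^{\beta-1}\,ds=\frac{\Gamma(\alpha)\Gamma(\beta)}{\Gamma(\alpha+\beta)}\,t^{\alpha+\beta-1}$, valid for $\alpha,\beta>0$, which applies here because $1-\delta>0$. Starting from $(K\mathbf 1)(t)=B\,t^{1-\delta}/(1-\delta)$ and feeding the output back into $K$, a straightforward induction gives
\[
(K^k\mathbf 1)(t)\ =\ \frac{\bigl(B\,\Gamma(1-\delta)\,t^{1-\delta}\bigr)^{k}}{\Gamma\bigl(k(1-\delta)+1\bigr)},\qquad k\ge 0 .
\]
Summing over $k$ recovers exactly $\Phi\bigl(B\Gamma(1-\delta)t^{1-\delta}\bigr)$ for the Mittag--Leffler-type function $\Phi$ in the statement; this power series is entire, since $\Gamma(k(1-\delta)+1)$ grows super-geometrically in $k$, so the right-hand side of the claimed bound is finite for every $t$.

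For the remainder, fix $\tau\in(0,T)$; by continuity $f$ is bounded on the compact interval $[0,\tau]$, say by $M_\tau$, so monotonicity of $K$ gives, for $t\in[0,\tau]$,
\[
0\le (K^n f)(t)\le M_\tau (K^n\mathbf 1)(t)=M_\tau\,\frac{\bigl(B\Gamma(1-\delta)t^{1-\delta}\bigr)^n}{\Gamma(n(1-\delta)+1)}\ \xrightarrow[n\to\infty]{}\ 0,
\]
again by the super-geometric growth of the Gamma function. Letting $n\to\infty$ in the first displayed inequality yields $f(t)\le A\,\Phi\bigl(B\Gamma(1-\delta)t^{1-\delta}\bigr)$ for $t\in[0,\tau]$, and since $\tau<T$ was arbitrary this holds on all of $[0,T)$. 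The only points that require any care are the bookkeeping in the induction for $K^k\mathbf 1$ — keeping the Gamma factors aligned so that the Beta identity telescopes cleanly — together with checking that $K$ is genuinely positivity-preserving and that the continuity (hence local boundedness) of $f$ is exactly what makes the tail term disappear; none of these is a serious obstacle.
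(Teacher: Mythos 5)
Your proof is correct and complete. Note that the paper itself offers no argument for this lemma: it is stated with a pointer to the reference [SS] (Schonbek--Schonbek), so there is nothing internal to compare against. Your iteration argument is the standard proof of this singular (weakly singular kernel) Gronwall inequality, essentially Henry's lemma: the induction $f\le A\sum_{k=0}^{n-1}K^k\mathbf 1+K^nf$ is set up correctly, the Beta-integral computation giving $(K^k\mathbf 1)(t)=\bigl(B\Gamma(1-\delta)t^{1-\delta}\bigr)^k/\Gamma(k(1-\delta)+1)$ is exactly right (the Gamma factors telescope as you claim), and the remainder is killed on each compact $[0,\tau]$ by local boundedness of $f$ together with the super-geometric growth of $\Gamma(n(1-\delta)+1)$. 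The only hypothesis you use that is not written explicitly in the statement is $B\ge 0$, which is needed for the monotonicity of $K$; this is implicit in the lemma's intended use (in the paper $B$ is of the form $C\,\|\nabla W\|_\infty\|\rho_0\|_1\ge 0$), but it is worth stating when you invoke positivity of the operator.
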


\begin{proof}[Proof of Proposition \ref{pr:Lp-bounds}]
  Inequality (\ref{L1}) follows by direct integration of equation
  \eqref{AE} (or of the mild formulation (\ref{eq:Duhamel})) since it
  has divergence form. To obtain (\ref{lotro}), take the $L^p$ norm in
  \eqref{eq:Duhamel} and use \eqref{eq:norm-heat-gradient} to obtain
  \begin{align}
    \label{cosas}
    \|\rho_t \|_p
    &\leq \| \rho_0 \|_p
    + C \int_0^t \frac{1}{\sqrt{t-s}}\|\rho_s\|_p
    \,\| \nabla W * \rho_s \|_{\infty} \,ds
    \\
    \notag
    &\leq \| \rho_0 \|_p
    + C \int_0^t \frac{1}{\sqrt{t-s}}\|\rho_s\|_p
    \, \| \nabla W \|_{\infty}
    \| \rho_s \|_{1} \,ds
    \\
    \notag
    &\leq \| \rho_0 \|_p
    + C \| \nabla W \|_{\infty} \|\rho_0\|_1
    \int_0^t
    \frac{1}{\sqrt{t-s}}\|\rho_s\|_p \,ds.
  \end{align}
  The modified Gronwall inequality (Lemma \ref{modgron}) yields
  (\ref{lotro}).
  To obtain (\ref{eq:Hm-bound}) take any multi-index $\gamma$ with
  $|\gamma| \leq m$, apply $\partial^\gamma$ to \eqref{AE} and multiply
  by $\partial^\gamma \rho$ to obtain
  \begin{equation}
    \label{eq:Wm-1}
    \frac{1}{2} \frac{d}{dt} \int (\partial^\gamma \rho)^2
    =
    - \int \nabla (\partial^\gamma \rho) \cdot
    \big( \partial^\gamma (\rho (\nabla W * \rho)) \big)
    - \int |\nabla \partial^\gamma \rho|^2.
  \end{equation}
  (To make this reasoning rigorous, as we are using eq. \eqref{AE}
  instead of the weak formulation, we have to carry it out on
  approximating solutions with smooth initial data and then pass to
  the limit. This process is straightforward and as such we omit the
  details.) By Cauchy-Schwarz's inequality the first term can be
  bounded by
  \begin{equation}
    \label{eq:Wm-2}
    \left|
      \int \nabla (\partial^\gamma \rho) \cdot
      \big( \partial^\gamma (\rho (\nabla W * \rho)) \big)
    \right|
    \leq
    \left(
      \int |\nabla \partial^\gamma \rho|^2
    \right)^{1/2}
    \left(
      \int \big| \partial^\gamma (\rho (\nabla W * \rho)) \big|^2
    \right)^{1/2}.
  \end{equation}
  To estimate the second parentheses, use equation
  \eqref{eq:Hm-norm-of-product} and then Young's convolution
  inequality to obtain
  \begin{multline*}
    \left(
      \int \big| \partial^\gamma (\rho (\nabla W * \rho)) \big|^2
    \right)^{1/2}
    \leq
    \| \rho (\nabla W * \rho) \|_{m,2}
    \leq
    C \big(
    \| \rho \|_{m,2} \|\nabla W * \rho\|_{\infty}
    +
    \| \rho \|_{\infty} \|\nabla W * \rho\|_{m,2}
    \big)
    \\
    \leq
    C \|\nabla W \|_\infty\, \| \rho \|_{m,2}
    \big(
    \|\rho\|_1 + \| \rho \|_{\infty}
    \big)
    =: C(t) \| \rho \|_{m,2},
  \end{multline*}
  with $C(t)$ a given function that involves $\|\nabla W\|_\infty$,
  $\|\rho_0\|_1$ and $C_\infty(t)$ from \eqref{lotro}. Using this in
  \eqref{eq:Wm-2} and applying Young's inequality we get
  \begin{equation}
    \label{eq:Wm-2.5}
    \left|
      \int \nabla (\partial^\gamma \rho) \cdot
      \big( \partial^\gamma (\rho (\nabla W * \rho)) \big)
    \right|
    \leq
    \frac{1}{2}
    \int |\nabla \partial^\gamma \rho|^2
    +
    C(t)^2 \| \rho \|_{m,2}^2.
  \end{equation}
  Combining this  with  in \eqref{eq:Wm-1} yields
  \begin{equation*}
    \frac{1}{p} \frac{d}{dt} \int (\partial^\gamma \rho)^2
    \leq
    C(t)^2 \| \rho \|_{m,2}^2.
  \end{equation*}
  Adding  all multi-indices $\gamma$ with $|\gamma| \leq m$ gives
    \begin{equation*}
    \frac{d}{dt} \|\rho\|_{m,2}^2
    \leq
    C(t)^2 \| \rho \|_{m,2}^2
  \end{equation*}
  for some other time-dependent function $C(t)$. Integrating this
  inequality over time proves the last part of the proposition if
  all derivatives above are well defined.
\end{proof}

\medskip

\noindent Now, we combine the short time existence in Theorem
\ref{th:short-time} with the a priori results in Proposition
\ref{pr:Lp-bounds} to yield the global existence.

\begin{thm}
  \label{global}
  Under the conditions i) of Proposition {\rm\ref{pr:Lp-bounds}},
  there exists a unique global mild solution $\rho$ of \eqref{AE} with
  $\rho \in \mathcal{C}([0,\infty);L^p)$. Under the conditions ii) of
  Proposition {\rm\ref{pr:Lp-bounds}}, there exists a unique global
  mild solution $\rho$ of \eqref{AE} with $\rho \in
  \mathcal{C}([0,\infty);H^m)$.
\end{thm}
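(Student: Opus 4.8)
The plan is to combine the two results already established: the short-time existence theorem (Theorem~\ref{th:short-time}), which produces a unique maximal mild solution and guarantees that the $\mathcal{W}^{m,p}$-norm blows up at the maximal time $T^*$ if $T^* < +\infty$, and the a priori bounds of Proposition~\ref{pr:Lp-bounds}, which control exactly the relevant norms by quantities that are finite on every bounded time interval. The key observation is that these two facts are logically incompatible with $T^* < +\infty$, and this contradiction forces $T^* = +\infty$.

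Concretely, under the hypotheses in case i), Theorem~\ref{th:short-time} applied with the pair $(m,p) = (0,p)$ (together with the extra integrability $\rho_0 \in L^1$, so that the solution lies in $BC([0,T^*); L^p \cap L^1)$) yields a unique maximal mild solution $\rho \in \mathcal{C}([0,T^*); L^p)$. Proposition~\ref{pr:Lp-bounds}~i) gives, for every $t \in [0,T^*)$, the bound $\|\rho_t\|_p \le C_p(t)\,\|\rho_0\|_p$ with $C_p(t)$ continuous (hence bounded) on $[0,T^*]$ whenever $T^* < +\infty$. This contradicts \eqref{eq:Wmp-blowup}, so $T^* = +\infty$ and $\rho \in \mathcal{C}([0,\infty); L^p)$; uniqueness is inherited from Theorem~\ref{th:short-time}. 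For case ii), the argument is identical with $(m,p) = (m,2)$: the hypotheses $\rho_0 \in \mathcal{W}^{m,2} \cap L^1 \cap L^\infty$ and $\nabla W \in (L^1 \cap L^\infty)^N$ place us in the setting of Theorem~\ref{th:short-time}, producing a maximal solution in $\mathcal{C}([0,T^*); H^m)$, and Proposition~\ref{pr:Lp-bounds}~ii) bounds $\|\rho_t\|_{H^m} \le C_{m,2}(t)\,\|\rho_0\|_{H^m}$ with $C_{m,2}$ bounded on finite time intervals; again \eqref{eq:Wmp-blowup} is violated unless $T^* = +\infty$.

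One small technical point worth addressing explicitly is that the a priori estimates in Proposition~\ref{pr:Lp-bounds} were derived from the PDE \eqref{AE} rather than from the mild formulation, and strictly speaking on smooth approximating solutions; so one should note that the solution furnished by Theorem~\ref{th:short-time} has enough regularity (being in $\mathcal{C}([0,T^*); H^m)$ with $m \ge 1$ in case ii), and is handled by the approximation argument already sketched in the proof of the Proposition in case i)) for the bounds to apply to it. This is routine and can be dispatched in a sentence. I do not anticipate any genuine obstacle here — the theorem is a standard bootstrap (local existence plus a priori bound implies global existence), and essentially all the work has already been done in the preceding two results; the only thing to be careful about is matching the function spaces in the hypotheses of Proposition~\ref{pr:Lp-bounds} with those in Theorem~\ref{th:short-time} so that both apply to the same solution.
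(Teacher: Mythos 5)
Your proposal is correct and is exactly the argument the paper intends: the paper itself states Theorem \ref{global} as the direct combination of the short-time existence with blow-up alternative (Theorem \ref{th:short-time}) and the a priori bounds of Proposition \ref{pr:Lp-bounds}, which rule out finite-time blow-up of the controlling norms. Your attention to matching the function spaces (and to the approximation issue in applying the a priori estimates to the mild solution) is consistent with what the paper does implicitly.
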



\subsection{Uniform bound for $\|\rho\|_\infty$}

Consider the solution $\rho \in \mathcal{C}([0,+\infty), L^1 \cap
L^\infty)$ obtained from Theorem \ref{global} with $p = \infty$. In
this section we prove that the $L^\infty$ norm is actually uniformly
bounded for all times:

\begin{thm}
  \label{th:Linftybound}
  Let the interaction potential $W$ be such that $\nabla W \in (L^1\cap
  L^\infty)^N$. Let $\rho_0 \in L^1\cap L^{\infty}$ nonnegative, and
  suppose $\rho$ is the solution constructed in Theorem {\rm
    \ref{global}} with data $\rho_0$. Then there exists a constant
  $\Cc_{\infty} $ depending only on $N$, $W$, and $\|\rho_0\|_1$ such
  that
  \begin{equation}
    \label{eq:Linftybound}
    \| \rho_t \|_{\infty} \leq \Cc_{\infty}
    \quad \text{ for all } t \geq 0.
  \end{equation}
\end{thm}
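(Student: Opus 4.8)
The plan is to bootstrap from the time-dependent $L^\infty$ bound in Proposition~\ref{pr:Lp-bounds} to a uniform one by exploiting the smoothing and decay of the heat semigroup in the Duhamel formula \eqref{eq:Duhamel}, together with conservation of mass \eqref{L1}. Since $\rho$ is nonnegative and $\|\rho_t\|_1 = M$ for all $t$, the linear part $e^{t\Delta}\rho_0$ decays like $t^{-N/2}$ in $L^\infty$ (more precisely $\|e^{t\Delta}\rho_0\|_\infty \le \min\{\|\rho_0\|_\infty, (4\pi t)^{-N/2}M\}$), so the real work is to control the bilinear term uniformly in $t$.

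First I would set $u(t) := \sup_{0\le s \le t}\|\rho_s\|_\infty$, which is finite on finite intervals by Proposition~\ref{pr:Lp-bounds}(i) with $p=\infty$, and nondecreasing. Taking the $L^\infty$ norm in \eqref{eq:Duhamel} and using \eqref{eq:norm-heat-gradient} with $r=\infty$ (so $\|\nabla G(t,\cdot)\|_1 \le C t^{-1/2}$) and Young's inequality, I get
\begin{equation*}
  \|\rho_t\|_\infty
  \le \|e^{t\Delta}\rho_0\|_\infty
  + C\int_0^t \frac{1}{\sqrt{t-s}}\, \|\rho_s(\nabla W * \rho_s)\|_\infty \, ds.
\end{equation*}
Now $\|\nabla W * \rho_s\|_\infty \le \|\nabla W\|_\infty \|\rho_s\|_1 = \|\nabla W\|_\infty M$, so the integrand is bounded by $C\|\nabla W\|_\infty M\, (t-s)^{-1/2}\|\rho_s\|_\infty$. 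The naive estimate $\int_0^t (t-s)^{-1/2}\,ds = 2\sqrt t$ diverges, which is exactly why Proposition~\ref{pr:Lp-bounds} gives only a growing bound; to close a uniform estimate I must also invoke decay of $\|\rho_s\|_\infty$, which requires using a second, stronger bound on the convolution term for large $s$, namely $\|\nabla W * \rho_s\|_\infty \le \|\nabla W\|_1 \|\rho_s\|_\infty$.

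The key step is therefore to split the time integral at a fixed threshold, say $t/2$ (or a fixed $t_0$), and treat the two pieces differently. For $s \in [0,t/2]$ the kernel $(t-s)^{-1/2} \le C t^{-1/2}$ is harmless and one uses $\|\rho_s\|_1 = M$; this piece contributes at most $C\|\nabla W\|_\infty M \cdot t^{-1/2}\int_0^{t/2}\|\rho_s\|_1\,ds$ — but that still grows, so instead for this early piece one keeps $\|\rho_s\|_\infty \le u(t)$ and gets a contribution $\le C\|\nabla W\|_\infty M\, \sqrt{t}\, $ only if one is not careful; the correct device is to use the interpolation $\|\rho_s(\nabla W*\rho_s)\|_\infty \le \|\nabla W\|_1 \|\rho_s\|_\infty^2$ combined with the a~priori $L^\infty$-decay. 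Concretely, the clean route is: (a) first establish from the Duhamel formula and the mass bound that $\|\rho_t\|_\infty \le C(1+t)^{-N/2}\,u_0$ for some $u_0$ depending on the data, i.e.\ a \emph{decay} estimate on finite times that is uniform; to do this one sets $v(t):=\sup_{0\le s\le t}(1+s)^{N/2}\|\rho_s\|_\infty$, plugs in, and bounds
\begin{equation*}
  (1+t)^{N/2}\int_0^t \frac{(1+s)^{-N/2}}{\sqrt{t-s}}\,ds \le C
\end{equation*}
for $N\ge 1$, which holds since this integral is bounded uniformly in $t$; then the bilinear term is controlled by $C\|\nabla W\|_1\, v(t) \cdot (1+t)^{N/2}\int_0^t (t-s)^{-1/2}(1+s)^{-N}\|\rho_s\|_\infty\,ds$, and provided $\|\nabla W\|_1$ (equivalently the data) is small one absorbs $v(t)$. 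The honest subtlety: for $N=1$ the integral $\int_0^t(1+s)^{-1/2}(t-s)^{-1/2}\,ds$ is only $O(1)$ uniformly, so one genuinely needs a smallness hypothesis somewhere, or else one argues that $\|\nabla W\|_1\|\rho_0\|_1$ small suffices; but the theorem as stated claims $\Cc_\infty$ depends only on $N$, $W$, $\|\rho_0\|_1$ with no smallness, so the actual proof must instead run a Gronwall-type bootstrap.

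The main obstacle — and the step I expect to occupy most of the argument — is closing the estimate without a smallness assumption. I anticipate the paper does this by a \emph{local-in-time iteration made uniform}: show first (using the modified Gronwall Lemma~\ref{modgron} as in Proposition~\ref{pr:Lp-bounds}) that on any interval $[t_0, t_0+\tau]$ with $\tau$ depending only on $\|\nabla W\|_\infty$, $M$ and $N$ (and crucially \emph{not} on $t_0$), one has $\|\rho_{t_0+\tau}\|_\infty \le \Phi(\text{const})\,\|\rho_{t_0}\|_\infty$ — but that gives exponential growth. So instead the right mechanism is a \emph{hypercontractivity / gain of integrability} argument: iterate $L^p \to L^{2p}$ estimates via the heat semigroup (using $\|e^{t\Delta}f\|_{2p} \le C t^{-N/(2)(1/p - 1/(2p))}\|f\|_p$ and $\|\nabla e^{t\Delta}f\|_{2p}\le Ct^{-1/2 - N/(2)(1/p-1/(2p))}\|f\|_p$) to walk from the conserved $L^1$ bound up to $L^\infty$, at each stage picking up only powers of the \emph{previous} norm and of $\|\nabla W\|_{q}$, so that after finitely many ($\sim \log_2 \infty$, i.e.\ choosing exponents $1=p_0 < p_1 < \dots < p_k = \infty$ with $p_{j+1}=2p_j$) steps one arrives at a bound for $\|\rho_t\|_\infty$ for $t\ge 1$ in terms of $M$, $W$, $N$ alone; for $t\le 1$ one uses Proposition~\ref{pr:Lp-bounds}(i) directly, which is already bounded on $[0,1]$. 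Combining the two ranges gives \eqref{eq:Linftybound}. The delicate bookkeeping is tracking that the constants at each stage of the $L^{p_j}\to L^{p_{j+1}}$ iteration remain finite and depend only on the allowed quantities, and that the time-integrals $\int_0^t (t-s)^{-1/2-\alpha_j}s^{-\beta_j}\,ds$ arising at each step converge — which forces a careful choice of the exponents $p_j$ and the use of the bound $\|\nabla W*\rho_s\|_\infty \le \min\{\|\nabla W\|_\infty M,\ \|\nabla W\|_{q_j}\|\rho_s\|_{p_j}\}$ at the appropriate stage.
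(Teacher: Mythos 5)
There is a genuine gap: you never find the device that lets the paper avoid any smallness hypothesis, and the scheme you settle on in its place does not close. The paper's proof applies Duhamel not from $0$ but over a short window, writing $\rho_t = e^{\delta\Delta}\rho_{t-\delta} - \int_{t-\delta}^t \nabla e^{(t-s)\Delta}\bigl(\rho_s(\nabla W*\rho_s)\bigr)\,ds$ for a fixed $\delta\in(0,t_0)$. The crucial observation is that the linear piece is controlled by the instantaneous $L^1\to L^\infty$ smoothing of the heat kernel together with conservation of mass: $\|e^{\delta\Delta}\rho_{t-\delta}\|_\infty \le (4\pi\delta)^{-N/2}\|\rho_0\|_1$, so it costs only $\delta^{-N/2}M$ and never sees $\|\rho_{t-\delta}\|_\infty$. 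The bilinear piece, estimated with $\|\nabla G(t-s)\|_r$ and $\|\rho_s(\nabla W*\rho_s)\|_q \le \|\rho_s\|_\infty\|\nabla W\|_q M$ for some $q>N$, contributes at most $C\|\nabla W\|_q M\,\delta^{\mu}\sup_{\tau\ge t_0-\delta}\|\rho_\tau\|_\infty$ with $\mu>0$ because the time integral runs only over an interval of length $\delta$. Taking the supremum over $t\ge t_0$ and choosing $\delta$ so that $2C\|\nabla W\|_q M\,\delta^{\mu}=1$ absorbs the nonlinear term, giving a uniform bound with no smallness of $W$ or of the data; smallness is manufactured from the free parameter $\delta$, at the harmless price of the factor $\delta^{-N/2}$ in the linear term. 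You brush past this when you mention splitting at $t/2$ "or a fixed $t_0$", but you never place the semigroup on $\rho_{t-\delta}$ and use mass conservation there, which is the whole point.

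Your proposed alternatives do not repair this. Route (a), the weighted norm $v(t)=\sup_s(1+s)^{N/2}\|\rho_s\|_\infty$, requires absorbing $v(t)$ and hence a smallness condition, as you yourself note — but the theorem has none, and indeed uniform decay of $\|\rho_t\|_\infty$ is simply false in general here (decay is exactly what Theorem \ref{th:L2decay} needs its smallness hypotheses for). Route (b), the $L^{p_j}\to L^{2p_j}$ hypercontractive iteration with Duhamel from $0$ to $t$, fails quantitatively: with only a uniform-in-time bound on $\|\rho_s\|_{p_j}$ (no decay available), the bilinear term produces $\int_0^t (t-s)^{-1/2-\alpha_j}\,ds$, which either diverges at $s=t$ (if $\alpha_j\ge 1/2$) or grows like $t^{1/2-\alpha_j}$ (if $\alpha_j<1/2$); it is never bounded uniformly in $t$. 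So the iteration cannot deliver a bound "for $t\ge 1$ in terms of $M$, $W$, $N$ alone" without first having the very decay that is unavailable. The missing idea is precisely the fixed-length Duhamel window plus $L^1\to L^\infty$ heat smoothing and absorption via the small factor $\delta^{\mu}$.
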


\begin{proof}
  Choose any time $t_0 > 0$. From Proposition \ref{pr:Lp-bounds} there
  is a constant $\tilde{C}_0$ for which
  \begin{equation}
    \label{eq:local_bound}
    \| \rho_t \|_\infty \leq \tilde{C}_0 \, ,
    \qquad t \in [0,t_0].
  \end{equation}
  We will prove that, for some $\delta > 0$,
  \begin{equation*}
    \| \rho_t \|_\infty \leq \Cc_{\infty} \, ,
    \qquad t \in [t_0-\delta,\infty).
  \end{equation*}
  This will ensure the global-in-time bound for the $L^{\infty}$ norm.

  Pick $ \delta$ satisfying $t_0 > \delta > 0$ (further conditions on
  $\delta$ will be fixed below). As $\rho$ is a mild solution, we may
  use Duhamel's formula \eqref{eq:Duhamel} between $t-\delta$ and $t$
  to obtain, for any $t \geq \delta$,
  \begin{equation}
    \label{eq:Duhamelt-delta-t}
    \rho_t = e^{\delta \Delta} \rho_{t-\delta}
    - \int_{t-\delta}^t \nabla e^{(t-s) \Delta}
    \big( \rho_s (\nabla W * \rho_s) \big)
    \,ds.
  \end{equation}
  For the first term in \eqref{eq:Duhamelt-delta-t} we have
  \begin{align}
    \label{eq:term1-bound}
    e^{ \delta \Delta} \rho_{t-\delta}\,
    &\leq \frac{1}{(4 \pi   \delta)^{N/2}}
    \int e^{-\frac{|x-y|^2}{4t}} \rho(t-\delta, y) \,dy
    \\
    & \leq \frac{1}{(4 \pi \delta)^{N/2}} \| \rho_{t-\delta} \|_1 =
    \frac{1}{(4 \pi \delta)^{N/2}} \|\rho_0\|_1.\nonumber
  \end{align}
  To bound the second term in (\ref{eq:Duhamelt-delta-t}) let $g := (\nabla W * \rho_s) \rho_s$,
  \begin{equation}
    \label{eq:term2-bound}
    \left| \int_{t-\delta}^t \nabla e^{(t-s)\Delta} \cdot g  \,ds \right|
    \,\leq
    \int_{t-\delta}^t \| \nabla G(t-s,\cdot) * g \|_\infty \,ds \\
    \leq
    \int_{t-\delta}^t \| \nabla G(t-s,\cdot) \|_r \, \|g\|_q\,
    \,ds
  \end{equation}
  with $\frac{1}{r} + \frac{1}{q} = 1$. We use the following estimate
  for $\|g\|_q$:
  \begin{equation}
    \label{eq:bound-norm-g}
    \|g\|_q
    \leq \|\rho_s\|_\infty \| \nabla W * \rho_s \|_q
    = \|\rho_s\|_\infty \, \| \nabla W \|_q \, \| \rho_0 \|_1\,,
  \end{equation}
  and \eqref{eq:norm-heat-gradient} for $\| \nabla G(t-s,\cdot) \|_r$
  with $\theta = \frac{1}{r}$, to conclude that
  $$
    \left| \int_{t-\delta}^t \nabla e^{(t-s)\Delta} \cdot g  \,ds \right|
    \,\leq
     C \| \nabla W \|_q \, \| \rho_0 \|_1
    \int_{t-\delta}^t (t-s)^{-\frac{1}{2} (1+N(1-\theta))}
    \|\rho_s\|_\infty \,ds\,.
    $$
  To finish the
  argument it is necessary that the right-hand side of the last  expression is  integrable near $0$, hence
  we need
  \begin{equation}
    \label{eq:a}
    -1 < -\frac{1}{2} (1+N(1-\theta))
    \Leftrightarrow
    1-\theta < \frac{1}{N}
    \Leftrightarrow
    r < \frac{N}{N-1},
  \end{equation}
  which means, from \eqref{eq:a}, that we need $N<q \leq \infty$. Putting
  \eqref{eq:term1-bound}, \eqref{eq:term2-bound},
  \eqref{eq:bound-norm-g} and \eqref{eq:norm-heat-gradient} together
  in \eqref{eq:Duhamelt-delta-t} we obtain, for $t \geq \delta$,
  \begin{align*}
    \| \rho_t \|_\infty \leq \, & \frac{1}{(4 \pi \delta)^{N/2}}
    \|\rho_0\|_1 + C \| \nabla W \|_q \, \| \rho_0 \|_1
    \int_{t-\delta}^t (t-s)^{-\frac{1}{2} (1+N(1-\theta))}
    \|\rho_s\|_\infty \,ds \nonumber
    \\
    \leq\, & C\,\delta^{-N/2} \|\rho_0\|_1 + C\,C_2 \sup_{\tau \in
      (t-\delta,t)} \|\rho(\tau)\|_\infty \int_{t-\delta}^t
    (t-s)^{-\frac{1}{2} (1+N(1-\theta))} \,ds \nonumber
    \\
    \leq \, & C\, \delta^{-N/2} \|\rho_0\|_1 + C\, C_2\, \delta^{\mu}
    \sup_{\tau \in (t-\delta,t)} \|\rho(\tau)\|_\infty
  \end{align*}
  where $C_2:=\| \nabla W \|_q \, \| \rho_0 \|_1$, $C$ is a generic
  constant depending only on $N$ and $2\mu=1-N(1-\theta)>0$. Now, we
  take the supremum of both sides of the inequality for $t\geq t_0$ to
  obtain
  \begin{align*}
    \sup_{t \geq t_0}\| \rho_t \|_\infty \leq \,& C\, \delta^{-N/2}
    \|\rho_0\|_1 + C\, C_2\, \delta^{\mu}
    \sup_{t \geq t_0-\delta} \|\rho(\tau)\|_\infty \\
    \leq \,& C\, \delta^{-N/2} \|\rho_0\|_1 + C\, C_2\, \delta^{\mu}
    \, \tilde C_0 + C\, C_2\, \delta^{\mu} \sup_{t \geq t_0}
    \|\rho(\tau)\|_\infty \nonumber
  \end{align*}
  by using the local estimate in time \eqref{eq:local_bound} for times
  less than $t_0$. Choosing $0<\delta<t_0$ such that $2\,C\, C_2\,
  \delta^{\mu} = 1$ we have
  \begin{equation*}
    \label{eq:Duhamel-bound3}
    \sup_{t \geq t_0} \| \rho_t \|_\infty
    \leq 2\,C\, \delta^{-N/2} \|\rho_0\|_1 + \tilde{C}_0
    = (2\,C)^{1+N/2\mu}\,\| \nabla W \|_q^{N/(2\mu)}
    \, \| \rho_0 \|_1^{1+N/(2\mu)} + \tilde{C}_0
    :=\Cc_{\infty}.
  \end{equation*}
  This completes the proof of the theorem.
\end{proof}


\section{Algebraic decay of the solution}

\subsection{ $L^2$ decay}

To obtain the decay in $L^2$ of the solutions, we first need to show
that solutions satisfy an estimate of the form
\begin{equation} \label{energy}
  \frac{d}{dt} \int \rho^2 \,dx
  \leq
  -C \int |\nabla \rho|^2 \,dx,
\end{equation}
for some $C > 0$. This will hold for {\it sufficiently small
  potentials}. From this, given that the solution remains in $L^1$,
decay at the same rate as solutions to the heat equation will follow.

\begin{thm}
  \label{th:L2decay}
  Let the potential $W$ be such that $\nabla W \in (L^1\cap L^\infty)^N$
  and take $\rho_0 \in L^1 \cap L^{\infty} \cap H^1$
  nonnegative. Assume additionally that one of the following smallness
  conditions holds:
  \begin{enumerate}
  \item[i)]
    \label{it:rhoL1}
    It holds that $\|\nabla W\|_\infty
    \|\rho_0\|_1^{\frac{N+4}{N+2}} < 1$.
  \item[ii)]
    \label{it:rhoLinfty}
    $W \in L^1$ and $\|W\|_1 \Cc_\infty < 1$, where
    $\Cc_\infty$ is such that \eqref{eq:Linftybound} holds (i.e., $\|
    \rho_t \|_{\infty} \leq \Cc_{\infty}$ for all $t \geq 0$).

  \item[iii)]
    \label{it:rhoL2}
    $W \in L^2$ and $\|W\|_2 \Cc_2 < 1$, where $\Cc_2$ is
    such that $\| \rho_t \|_{2} \leq \Cc_{2}$ for all $t
    \geq 0$.

  \item[iv)]
    \label{it:DeltaW}
    $\Cc\|\rho_0\|_1 \|[\Delta W]_{+} \|_{N/2} \leq 1/4$ where $\Cc$
    is given below in the proof and $[\Delta W]_{+} := \max\{\Delta W,
    0\}$.
  \end{enumerate}
  Then there exists a number $\Kk > 0$ depending only on the constants
  defined in the hypotheses such that
  \begin{equation}\label{eq:L2decay}
    \|\rho_t\|_2 \leq  \Kk \, (t+1)^{-N/2}\, .
  \end{equation}
\end{thm}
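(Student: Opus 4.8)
The plan is to first establish the energy inequality \eqref{energy}, i.e.\ $\frac{d}{dt}\|\rho_t\|_2^2\le -C\|\nabla\rho_t\|_2^2$ for some $C>0$, under each of the four smallness hypotheses, and then to run Schonbek's Fourier-splitting scheme on it, using crucially that the mass $\|\rho_t\|_1=\|\rho_0\|_1=:M$ is conserved (Proposition \ref{pr:Lp-bounds}), so that $\|\widehat{\rho_t}\|_\infty\le(2\pi)^{-N/2}M$ is bounded uniformly in $t$. For the energy identity I would multiply \eqref{AE1} by $\rho$ and integrate by parts (rigorously on the smooth approximations furnished by Theorem \ref{global}, which gives $\rho\in\mathcal C([0,\infty);H^1)$, then pass to the limit) to get
\[
  \tfrac12\tfrac{d}{dt}\!\int\rho^2 = -\int\rho\,\nabla\rho\cdot(\nabla W\ast\rho)-\int|\nabla\rho|^2 =: T-\|\nabla\rho\|_2^2 ,
\]
so the whole game is to absorb $T$ into the dissipation. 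I would rewrite $T=-\int\rho\,\nabla\rho\cdot(W\ast\nabla\rho)$ whenever $W\in L^1$ (or $L^2$), and $T=\tfrac12\int\rho^2(\Delta W\ast\rho)$ whenever $\Delta W$ is admissible.

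The four hypotheses are then four ways of controlling $T$. In \emph{case (ii)}, $|T|\le\|\rho\|_\infty\|W\|_1\|\nabla\rho\|_2^2\le\Cc_\infty\|W\|_1\|\nabla\rho\|_2^2$ by Young's convolution inequality and the uniform $L^\infty$ bound of Theorem \ref{th:Linftybound}, so $\|W\|_1\Cc_\infty<1$ yields \eqref{energy} with $C=2(1-\|W\|_1\Cc_\infty)$. In \emph{case (iii)}, $|T|\le\|W\ast\nabla\rho\|_\infty\,\|\rho\|_2\|\nabla\rho\|_2\le\|W\|_2\Cc_2\|\nabla\rho\|_2^2$ by Cauchy--Schwarz, again giving \eqref{energy}. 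In \emph{case (iv)}, positivity $\rho\ge0$ gives $T\le\tfrac12\int\rho^2([\Delta W]_+\ast\rho)$, and then Hölder with exponents $(\tfrac N{N-2},\tfrac N{N-2},\tfrac N2)$, the Sobolev embedding $\dot H^1\hookrightarrow L^{2N/(N-2)}$ (so $N\ge3$ here) and $\|[\Delta W]_+\ast\rho\|_{N/2}\le\|[\Delta W]_+\|_{N/2}M$ bound $T$ by $\tfrac12\Cc\,M\,\|[\Delta W]_+\|_{N/2}\|\nabla\rho\|_2^2$, with $\Cc$ the square of the Sobolev constant, whence \eqref{energy}. \emph{Case (i)} is the outlier: only $\|\nabla W\|_\infty$ is available, so the only usable convolution bound is $\|\nabla W\ast\rho\|_\infty\le\|\nabla W\|_\infty M$, giving $|T|\le\|\nabla W\|_\infty M\,\|\rho\|_2\,\|\nabla\rho\|_2$; interpolating $\|\rho\|_2\le C\|\nabla\rho\|_2^{N/(N+2)}\|\rho\|_1^{2/(N+2)}$ by Gagliardo--Nirenberg — valid in \emph{every} dimension $N\ge1$, which is exactly why this case has no dimensional restriction — produces $|T|\le C\|\nabla W\|_\infty M^{(N+4)/(N+2)}\|\nabla\rho\|_2^{\,2(N+1)/(N+2)}$, so the smallness $\|\nabla W\|_\infty M^{(N+4)/(N+2)}<1$ makes the prefactor subcritical.

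Given \eqref{energy}, the Fourier-splitting step is standard. By Plancherel $\|\nabla\rho\|_2^2=\int|\xi|^2|\widehat\rho|^2$, and for any radius $R(t)>0$,
\[
  \|\nabla\rho\|_2^2 \ge R(t)^2\Big(\|\rho\|_2^2-\int_{|\xi|<R(t)}|\widehat\rho|^2\Big) \ge R(t)^2\|\rho\|_2^2 - c_N\,R(t)^{N+2}M^2 ,
\]
using $|\widehat\rho|\le(2\pi)^{-N/2}M$. Choosing $R(t)^2=\alpha/\big(C(1+t)\big)$ with a fixed $\alpha>N/2$ turns \eqref{energy} into
\[
  \tfrac{d}{dt}\|\rho_t\|_2^2 \le -\tfrac{\alpha}{1+t}\|\rho_t\|_2^2 + \tfrac{C_N M^2}{(1+t)^{(N+2)/2}} ,
\]
and multiplying by the integrating factor $(1+t)^\alpha$ and integrating in time (the exponent $\alpha-(N+2)/2>-1$ keeping the time integral under control) gives $\|\rho_t\|_2^2\le\Kk^2(1+t)^{-N/2}$, which is \eqref{eq:L2decay}; the constant $\Kk$ depends only on $\|\rho_0\|_2$, $M$, $\alpha$ and the constant $C$ of \eqref{energy}, hence only on the data of the hypotheses.

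The main obstacle is case (i): there $T$ is dominated only by the sub-quadratic power $\|\nabla\rho\|_2^{2(N+1)/(N+2)}$ of the dissipation, so it does not by itself produce \eqref{energy}; one has to merge the Gagliardo--Nirenberg estimate with the Fourier-splitting machinery — using in particular the a priori $L^2$-boundedness that the crude estimate already gives — into a single bootstrap argument in order to still extract the rate $(1+t)^{-N/2}$. The only other points needing care are the approximation argument justifying the energy identity, which is routine given the well-posedness theory of Section~2, and the compatibility of the choices of $\alpha$, $R(t)$ and the integrating factor in the splitting step.
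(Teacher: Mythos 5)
Your handling of cases ii), iii), and iv) for $N\geq 3$, and of the passage from \eqref{energy} to the decay rate, is correct and essentially parallel to the paper. Cases ii) and iii) are verbatim the paper's estimates; in case iv) for $N\geq 3$ you replace the paper's bound $I\leq\tfrac12\|\rho\|_2^2\|[\Delta W]_+\ast\rho\|_\infty$ followed by GNS interpolation with H\"older plus the Sobolev embedding $\dot H^1\hookrightarrow L^{2N/(N-2)}$, which lands on the same inequality $T\leq C\,\|\rho\|_1\|[\Delta W]_+\|_{N/2}\|\nabla\rho\|_2^2$. For the conclusion you run Fourier splitting, whereas the paper integrates the nonlinear differential inequality $\frac{d}{dt}\|\rho\|_2^2\leq-\Kk\big(\|\rho\|_2^2\big)^{(N+2)/N}$ obtained from \eqref{energy} and \eqref{eq:GNS-Friedman} (Fourier splitting appears in the paper only in the $H^m$ step); both routes give $\|\rho_t\|_2^2\leq C(1+t)^{-N/2}$, i.e.\ $\|\rho_t\|_2\lesssim(1+t)^{-N/4}$, which is the rate actually used later in the paper, so read \eqref{eq:L2decay} as a bound on the squared norm when you say your estimate ``is'' \eqref{eq:L2decay}.

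There are, however, two genuine gaps. First, case iv) with $N=2$ is missing: there your Sobolev-embedding route is unavailable, and the paper supplies a separate argument (its case iv.a), estimating $2I\leq\|\rho\|_{2p}^2\|[\Delta W]_+\ast\rho\|_q$ with $1/p+1/q=1$ and interpolating both factors by \eqref{eq:GNS-Friedman} with exponents $a=\tfrac{2p-1}{2p}$, $b=\tfrac1p$, $a+\tfrac b2=1$, to get $I\leq C\|[\Delta W]_+\|_1\|\rho\|_1\|\nabla\rho\|_2^2$; you need this (or an equivalent) to cover the statement. Second, and more seriously, case i) is left unproved: as you yourself compute, the chain Cauchy--Schwarz, $\|\nabla W\ast\rho\|_\infty\leq\|\nabla W\|_\infty\|\rho_0\|_1$, and GNS gives only $|T|\leq C\|\nabla W\|_\infty\|\rho_0\|_1^{(N+4)/(N+2)}\|\nabla\rho\|_2^{2(N+1)/(N+2)}$, a sub-quadratic power of the dissipation, and then the right-hand side of \eqref{cosa} need not be negative when $\|\nabla\rho\|_2$ is small, so \eqref{energy} does not follow; your proposed ``bootstrap merging GNS with Fourier splitting'' is only gestured at and, as sketched, would at best give boundedness of $\|\rho_t\|_2$, not the decay. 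You should be aware that the paper's own proof of case i) consists of exactly the chain you wrote, with the final line recorded as $\|\nabla\rho\|_2^2\,\|\nabla W\|_\infty\|\rho_0\|_1^{(N+4)/(N+2)}$ (quadratic power, no GNS constant), from which \eqref{energy} is immediate under hypothesis i); your bookkeeping shows that this step, as literally written there, does not follow from the cited interpolation, so the difficulty you flag is real and is not resolved by an idea you overlooked in the source. Nevertheless, as a proof of Theorem \ref{th:L2decay} as stated, your case i) remains incomplete.
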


\begin{Remark}
  The constant $\mathcal{C}_2$ can be estimated by interpolation as $\mathcal{C}_2\leq\|\rho_0\|_1^{1/2}\mathcal{C}_{\infty}^{1/2}$.
  However, the constant $\mathcal{C}_{\infty}$ depends both
  on $W$ and $\rho_0$ in a complicated way, although an explicit bound can be extracted from the proof of Theorem
  {\rm\ref{th:Linftybound}}.
\end{Remark}

\begin{proof}
  We first establish (\ref{energy}) in all cases. Multiply equation
  \eqref{AE1} by $\rho$ and integrate to get
  \begin{equation}
    \label{cosa}
    \frac{d}{dt} \int \rho^2 dx
    =
    \int
    \rho \nabla \cdot(\rho \nabla W\ast \rho) dx
    -\int |\nabla \rho |^2 dx.
  \end{equation}
  We need to bound the first term on the right-hand side of the last
  equation.

  \medskip

  \noindent {\bf Case i): } We use H\"older's and Young's
  inequalities to obtain
  \begin{align*}
    \int \rho \nabla \cdot(\rho \nabla W\ast \rho) \,dx
    &=- \int  \nabla \rho \cdot(\rho \nabla W\ast \rho) \,dx
    \leq \|\nabla \rho\|_2  \|\rho\, (\nabla W * \rho)\|_2
    \\
    &\leq
    \|\nabla \rho\|_2 \, \|\rho\|_2 \, \|\nabla W * \rho\|_\infty
    \leq
    \|\nabla \rho\|_2 \, \|\rho\|_2 \,
    \|\nabla W \|_\infty \, \|\rho\|_1
    \\
    &\leq
    \|\nabla \rho\|_2^2\,
    \| \nabla W \|_\infty \, \|\rho_0 \|_1^{\frac{N+4}{N+2}},
  \end{align*}
  where we interpolated the $L^2$ norm of $\rho$ between $L^1$ and
  $\dot{H}^1$ by means of the GNS inequality \eqref{eq:GNS-Friedman}
  with $j=0$, $p=q=2$ and $m=s=1$ and the conservation of mass
  \eqref{L1}. Combining this estimate with (\ref{cosa}) yields
  (\ref{energy}), due to the condition i) in the hypotheses.

  \noindent {\bf Case ii): }
  By a similar reasoning we have
  \begin{align*}
    \int \rho \nabla \cdot(\rho \nabla W\ast \rho) dx  &=- \int  \nabla \rho \cdot(\rho \nabla W\ast \rho) dx\\
    &
    \leq
    \| \nabla \rho\|_2\, \|\rho\, (W * \nabla \rho)\|_2
    \leq
    \|\nabla \rho\|_2^2 \, \|\rho\|_{\infty} \, \|W\|_1
    \leq
    \|\nabla \rho\|_2^2 \ \Cc_\infty \, \|W\|_1.
  \end{align*}
  Combining this estimate with (\ref{cosa}) yields
  (\ref{energy}), since $\Cc_\infty \, \|W\|_1 < 1$.

  \medskip

  \noindent {\bf Case iii): } Similarly,
  \begin{align*}
    \int \rho \nabla \cdot(\rho \nabla W\ast \rho) dx
    &
    = -\int  \nabla \rho \cdot(\rho \nabla W\ast \rho) dx
    \\
    &\leq
    \| \nabla \rho\|_2 \|\rho (W\ast \nabla  \rho)\|_2
    \leq
    \|\nabla \rho\|_2 \|\rho\|_2\| W \ast \nabla \rho\|_{\infty}
    \\
    &\leq \|\nabla \rho\|_2^2\ \Cc_2\, \|W\|_{2}.
  \end{align*}
  This gives \eqref{energy} as before, since $\Cc_2\, \|W\|_{2} < 1$
  by hypothesis.

  \medskip

  \noindent {\bf Case iv):} Since the Laplacian of the interaction
  potential lies in an $L^p$ space, we can write
  \begin{equation*} \label{caso3} \int \rho \nabla \cdot(\rho \nabla
    W\ast \rho) dx = -\int \rho \nabla \rho \cdot (\nabla W \ast \rho)
    dx= \int \frac{\rho^2}{2} (\Delta W\ast \rho) dx \leq \int
    \frac{\rho^2}{2} ([\Delta W]_{+}\ast \rho) =: I \, .
  \end{equation*}

  \noindent Case iv.a): We first consider the case $N=2$. Then,
  choosing any $1 < p < +\infty$,
  \begin{equation*}
    \label{est-1}
    2I \leq \|\rho\|_{2p}^2 \|[\Delta W]_{+} * \rho\|_q
    \quad \text{ with } \; \frac{1}{p} + \frac{1}{q} =1.
  \end{equation*}
  By the GNS inequality (\ref{eq:GNS-Friedman}) we have
  \begin{equation}
    \label{a}
    \|\rho\|_{2p}^2 \leq C  \|\nabla\rho\|_2^{2a}
    \|\rho\|_1^{2(1-a)}\,,
  \end{equation}
  where $a = \frac{2p-1}{2p}$. Now we estimate the convolution by
  \begin{equation} \label{b} \|[\Delta W]_{+}\ast \rho\|_q \leq
    \|[\Delta W]_{+}\|_1\|\rho\|_q\,.
  \end{equation}
  Interpolating $L^q$ between $L^1$ and $H^1$ by the GNS inequality
  (\ref{eq:GNS-Friedman}) with $b = \frac{1}{p}$ yields the estimate
  \begin{equation} \label{c} \|\rho\|_q \leq C \|\nabla\rho\|_2^{b
    }\|\rho\|_1^{1-b}\,.
  \end{equation}
  Note that $a +\frac{b}{2}=1$. Thus combining (\ref{a}), (\ref{b}),
  and (\ref{c}) gives
  \[
  I \leq 2C\|[\Delta W]_{+}\|_1 \|\nabla\rho\|_2^2 \|\rho\|_1 \, .
  \]
  Note that $p$ can be chosen to be any $p\in (1,\infty)$, and this
  final inequality does not depend on the choice (except for the
  constant $C$ in front of the inequality). This
  concludes the proof of the case $N=2$.

  \medskip

  \noindent Case iv.b): We now consider $N\geq 3$. In this case the
  nonlinear term is bounded by
  \begin{equation*}
    I \leq \frac12\|\rho\|_2^2\|[\Delta W]_{+}\ast \rho\|_{\infty}
    \leq C \|\rho\|_1^{\frac{4}{N+2}}\|\nabla\rho\|_2^{\frac{2N}{N+2}}
    \|[\Delta W]_{+}\ast \rho\|_{\infty}\,,
  \end{equation*}
  by means of the GNS inequality \eqref{eq:GNS-Friedman}. To estimate
  the term $\|[\Delta W]_{+}\ast \rho\|_{\infty}$, we use H\"older's
  inequality
  \[
  \|[\Delta W]_{+}\ast \rho\|_{\infty} \leq \|[\Delta W]_{+}\|_p
  \, \|\rho\|_{q} \, ,
  \]
  with $1 < p < +\infty$ and $\frac{1}{p} +\frac{1}{q} =1$. We
  interpolate the $L^q$ norm between $L^1$ and $\dot{H}^1$ using again
  the GNS inequality \eqref{eq:GNS-Friedman} to obtain
  \begin{equation}
    \label{eq:gnsq}
    \|\rho\|_q \leq \|\nabla\rho\|_2^a \,\|\rho\|_1^{1-a} \,,
  \end{equation}
  with $\frac{1}{q} = a (\frac{1}{2} -\frac{1}{N}) +1-a$, and thus $a=
  \frac{2N}{p(N+2)}$. For $a\leq 1$ we need $p \geq
  \frac{2N}{2+N}$. We want to choose $p$ so that
  \[
  \frac{N}{2+N} + \frac{a}{2}
  = \frac{N}{2+N} + \frac{N}{p(N+2)}
  = 1.
  \]
  This will hold if $p=N/2$, $N\geq 3$. Notice that the GNS inequality
  \eqref{eq:gnsq} does not hold for $N=2$ since we should take
  $a=1$ and $q=\infty$, which is not allowed. From the above
  inequalities, it follows that
  \[
  I \leq C \|\rho\|_1\, \|\nabla \rho\|_2^2 \,\|[\Delta
  W]_{+}\|_{N/2}\,.
  \]
  Hence, Case iv) for $N \geq 3$ follows.

  \medskip

  \noindent We  now finish the proof of the $L^2$-decay in the
  three cases above. The GNS inequality (\ref{eq:GNS-Friedman}) once more gives
  \begin{equation*}
    \|\rho\|_2 \leq \|\nabla \rho\|_2^{\frac{N}{N+2}}
    \|\rho\|_1^{\frac{2}{N+2}},
    \quad \text{ so }
    \quad
    \|\nabla \rho\|_2
    \geq \|\rho\|_2^{\frac{N+2}{N}} \|\rho\|_1^{-\frac{2}{N}}.
  \end{equation*}
  Thus from (\ref{energy}) it follows that
  \[
  \frac{d}{dt} \int \rho^2 dx \leq -\frac{1}{2} \int |\nabla \rho|^2
  dx \leq -\Kk \|\rho\|_2^{\frac{N+2}{N}}\,.
  \]
  Integration of this differential inequality yields the expected
  decay.
\end{proof}

\begin{Remark}
Some typical examples of interaction potentials in applications
are variations of the so-called Morse potential
$W(x)=1-e^{-|x|^\alpha}$ with $\alpha\geq 1$, see {\rm\cite{BCL}}.
For instance, we can get decay for solutions to the aggregation
equation with $W(x)= 1-e^{-|x|^2}$ provided
$$
\Cc \leq \frac{1}{4}\left(\|\rho_0\|_1 \|[\Delta
W]_{+}\|_{N/2}\right)^{-1},
$$
here $\|[\Delta W]_{+}\|_{N/2}^{N/2} = \int _{|x|\leq N}(|x|^2
-N)^{N/2} e^{-\frac{N}{2}|x|^2} dx$.
\end{Remark}

\subsection{ $H^m$ decay}

In this subsection we consider the decay in $H^m$ spaces. The aim is
to show that for certain potentials the decay rate will be the same as
the one for the solutions to the heat equation.  Specifically, we show
that if the $L^2$ decay happens at the same rate as for the heat
equation (as was shown in Theorem \ref{th:L2decay} under some
additional conditions) then the decay in $H^m$ will happen at the
same rate as for the heat equation. Our potential will satisfy the
hypotheses given in Theorems \ref{global} and \ref{th:Linftybound},
which ensures the existence of a unique global solution,
$L^{\infty}$-uniformly bounded in time. In this subsection, we remind
the reader that $D^m \rho$ is defined via the Fourier transform by
$\widehat{D^m \rho} =|\xi|^m \widehat{\rho}$, as remarked in Section
\ref{sec:notation}.

\medskip

We first give a technical lemma to be used later in the proof:
\begin{Lemma}
  \label{lem:Dm-product}
  Take $m \geq 1$, and functions $f, g, h \in L^2$ such that $D^m f$
  and $D^m g$ are in $L^2$. Then $D^m (f(g*h))$ is also in $L^2$ and
  \begin{equation*}
    \| D^m (f(g*h)) \|_2
    \leq
    2^{m-1} \left(
      \| D^m f \|_2 \|g\|_2 \|h\|_2 + \|f\|_2 \|D^m g\|_2 \|h\|_2
    \right).
  \end{equation*}
\end{Lemma}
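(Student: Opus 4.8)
The plan is to prove the estimate entirely on the Fourier side, where the convolution $g*h$ becomes a product and the three factors separate cleanly. With the normalization of the Fourier transform used in Section~\ref{sec:notation} one has $\widehat{g*h}=(2\pi)^{N/2}\hat g\,\hat h$ and $\widehat{f\,v}=(2\pi)^{-N/2}\hat f*\hat v$, so that composing these the constants cancel and we obtain the identity
\[
\widehat{f(g*h)} = \hat f * (\hat g\hat h),
\]
valid as an a.e.\ equality of $L^2$ functions: indeed $g*h\in L^\infty$ with $\|g*h\|_\infty\le\|g\|_2\|h\|_2$, hence $f(g*h)\in L^2$, while $\hat g\hat h\in L^1$ by Cauchy--Schwarz so that $\hat f*(\hat g\hat h)\in L^2$ by Young's inequality. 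Since $\widehat{D^m u}=|\xi|^m\hat u$ by definition, the claim reduces to showing that $|\xi|^m\bigl(\hat f*(\hat g\hat h)\bigr)$ lies in $L^2$ with norm bounded by the right-hand side.

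The key step is to distribute the weight $|\xi|^m$ under the convolution integral. For a.e.\ $\xi$,
\[
|\xi|^m\,\bigl|\hat f*(\hat g\hat h)(\xi)\bigr|
\le \int |\xi|^m\,|\hat f(\xi-\eta)|\,|\hat g(\eta)|\,|\hat h(\eta)|\,d\eta,
\]
and I would then use the elementary convexity bound $|\xi|^m=|(\xi-\eta)+\eta|^m\le 2^{m-1}\bigl(|\xi-\eta|^m+|\eta|^m\bigr)$, valid for every real $m\ge 1$, to split the right-hand side into the sum of two convolutions,
\[
2^{m-1}\Bigl[\,\bigl(|\cdot|^m\,|\hat f|\bigr)*\bigl(|\hat g|\,|\hat h|\bigr)\ +\ |\hat f|*\bigl(|\cdot|^m\,|\hat g|\,|\hat h|\bigr)\,\Bigr](\xi).
\]

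It then suffices to bound each term in $L^2$ by Young's inequality $\|F*G\|_2\le\|F\|_2\|G\|_1$, choosing the $L^2$ factor differently in the two terms. For the first term, $\|(|\cdot|^m|\hat f|)*(|\hat g||\hat h|)\|_2\le\||\cdot|^m\hat f\|_2\,\|\hat g\hat h\|_1\le\|D^m f\|_2\,\|\hat g\|_2\|\hat h\|_2=\|D^m f\|_2\,\|g\|_2\|h\|_2$, using Cauchy--Schwarz on $\hat g\hat h$ and Plancherel. For the second term I would put the weight $|\cdot|^m$ on $\hat g$ rather than on $\hat h$, so that $\||\hat f|*(|\cdot|^m|\hat g||\hat h|)\|_2\le\|\hat f\|_2\,\||\cdot|^m\hat g\hat h\|_1\le\|f\|_2\,\||\cdot|^m\hat g\|_2\|\hat h\|_2=\|f\|_2\,\|D^m g\|_2\|h\|_2$. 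Adding the two contributions yields exactly the stated inequality, and its finiteness shows $D^m(f(g*h))\in L^2$.

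The one genuine choice in the argument — and the reason the hypotheses involve only $D^m f$ and $D^m g$ but not $D^m h$ — is that in the second term the weight must be placed on $\hat g$; placing it on $\hat h$ would instead require $D^m h\in L^2$. The remaining points (the a.e.\ validity of $\widehat{f(g*h)}=\hat f*(\hat g\hat h)$, moving the absolute value inside the integral, and the use of Fubini) are routine and would be made rigorous in the standard way, by first proving the inequality for $f,g,h\in\Ss$ and passing to the limit using the linearity of the estimate in each slot together with approximation of $f$, $g$ in the norms $\|\cdot\|_2+\|D^m\cdot\|_2$ and of $h$ in $\|\cdot\|_2$.
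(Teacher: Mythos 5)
Your argument is correct and follows essentially the same route as the paper: pass to the Fourier side, split the weight via $|\xi|^m \leq 2^{m-1}\left(|\xi-\eta|^m + |\eta|^m\right)$, apply Young's inequality to each resulting convolution, and finish with Cauchy--Schwarz and Plancherel, placing the weight on $\hat g$ in the second term exactly as the paper does with $\||\xi|^m\hat g\hat h\|_1 \leq \|D^m g\|_2\|h\|_2$. The only difference is cosmetic: you track the Fourier normalization constants and sketch a density argument explicitly, which the paper leaves implicit.
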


\begin{proof}
  Denote $u := g*h$. Using that $ |\xi|^m \leq 2^{m-1} ( |\xi-\eta|^m
  +|\eta|^m )$ for any $\xi, \nu \in \C$,
  \begin{align*}
    \int |D^m (fu) |^2 \,dx
    &=
    \int ||\xi|^m \widehat{fu} |^2 \,d\xi
    =
    \int ||\xi|^m \widehat{f} * \widehat{u} |^2 \,d\xi
    =
    \int \left| \int
      |\xi|^{m} \widehat{f}(\nu) \widehat{u}(\xi-\nu)
      \,d\nu \right|^2 \,d\xi
    \\
    &\leq
    2^{2m-2} \int \left|
      \int
      |\nu|^{m} |\widehat{f}(\nu)| |\widehat{u}(\xi-\nu)|
      \,d\nu
      +
      \int
      |\widehat{f}(\nu)| |\xi-\nu|^{m} |\widehat{u}(\xi-\nu)|
      \,d\nu
    \right|^2 \,d\xi
    \\
    &=
    2^{2m-2} \int \left|
      (|\xi|^m |\widehat{f}|) * |\widehat{u}|
      +  |\widehat{f}| * (|\xi|^m |\widehat{u}|)
    \right|^2 \,d\xi.
  \end{align*}
  Consequently, using Young's inequality for convolutions,
  \begin{equation*}
    \| D^m (fu) \|_2
    \leq
    2^{m-1} \left(
      \big\| (|\xi|^m |\widehat{f}|) * |\widehat{u}| \big\|_2
      + \big\| |\widehat{f}| * (|\xi|^m |\widehat{u}|) \big\|_2
    \right)
    \leq
    2^{m-1} \left(
      \| D^m f \|_2 \|\widehat{u}\|_1 + \|f\|_2 \||\xi|^m\widehat{u}\|_1
    \right).
  \end{equation*}
  The proof is now finished by noticing that
  \begin{equation*}
    \|\widehat{u}\|_1 = \|\widehat{g} \widehat{h}\|_1
    \leq \|\widehat{g}\|_2 \|\widehat{h}\|_2
    = \|g\|_2 \|h\|_2
  \end{equation*}
  and similarly
  \begin{equation*}
    \||\xi|^m\widehat{u}\|_1
    = \||\xi|^m \widehat{g} \widehat{h}\|_1
    \leq
    \||\xi|^m \widehat{g}\|_2  \|\widehat{h}\|_2
    = \|D^m g\|_2 \|h\|_2.
  \end{equation*}
\end{proof}

\begin{thm}\label{thm:Hm-decay}
  Let the potential $W$ satisfy $\nabla W \in (L^1\cap L^\infty)^N$ and
  let $\rho_0 \in L^1\cap L^{\infty}\cap H^{m}$, $m\geq 1$, with
  $\rho_0$ nonnegative. Consider $\rho$ the solution to \eqref{AE}
  given by Theorems {\rm \ref{global}} and
  {\rm\ref{th:Linftybound}} with data $\rho_0$ . Assume that the solution
  satisfies the $L^2$-decay estimate \eqref{eq:L2decay}. Then there
  exists a constant $C \geq 0$ which depends only on $W$, $\rho_0$,
  $m$ and $N$ such that for all $t\geq 0$
\begin{equation}
    \label{eq:Hm-decay}
    \|D^m\rho_t\|_2 \leq C (t+1)^{-(N/4+m/2)} \, .
\end{equation}
\end{thm}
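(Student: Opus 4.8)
The plan is to run a Fourier-splitting argument on the evolution of $\|D^m\rho_t\|_2^2$, bootstrapping on $m$ so that the lower-order decay rates (including the $L^2$-decay \eqref{eq:L2decay} for $m=0$) feed into the estimate for the top-order derivative. First I would derive the differential inequality: taking the Fourier transform of \eqref{AE1}, multiplying by $|\xi|^{2m}\overline{\widehat\rho}$ and integrating gives
\begin{equation*}
  \frac{1}{2}\frac{d}{dt}\|D^m\rho_t\|_2^2
  = -\|D^{m+1}\rho_t\|_2^2
    - \int |\xi|^{2m}\,\overline{\widehat\rho}\;\widehat{\nabla\cdot(\rho\,\nabla W*\rho)}\,d\xi .
\end{equation*}
The nonlinear term is handled by pulling out one derivative onto $\overline{\widehat\rho}$ (writing $|\xi|^{2m}=|\xi|^{m+1}\cdot|\xi|^{m-1}$, or integrating by parts in $x$ to move a gradient), then Cauchy--Schwarz, so it is bounded by $\|D^{m+1}\rho_t\|_2$ times $\|D^{m-1}(\rho(\nabla W*\rho))\|_2$ (up to constants). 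Here Lemma \ref{lem:Dm-product} with $g=\nabla W$, $h=\rho$ is exactly the tool: it gives $\|D^{m-1}(\rho\,(\nabla W*\rho))\|_2 \le C(\|D^{m-1}\rho\|_2\|\nabla W\|_2\|\rho\|_2 + \|\rho\|_2\|D^{m-1}\nabla W\|_2\|\rho\|_2)$, which is controlled provided $\nabla W\in H^{m-1}$ — so in fact I expect we need slightly more than $\nabla W\in (L^1\cap L^\infty)^N$, or else one handles low-order terms differently (e.g. using $\|\nabla W*\rho\|_\infty\le\|\nabla W\|_\infty\|\rho\|_1$ directly for the factors that do not carry the top derivative). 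Using Young's inequality to absorb $\tfrac12\|D^{m+1}\rho_t\|_2^2$ on the left, we arrive at
\begin{equation*}
  \frac{d}{dt}\|D^m\rho_t\|_2^2
  \le -\|D^{m+1}\rho_t\|_2^2 + C\sum_{j=0}^{m-1}\|D^j\rho_t\|_2^2,
\end{equation*}
where the lower-order terms on the right decay at rates already known by the inductive hypothesis.

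Next I would apply Fourier splitting. Fix a ball $B(t)=\{\xi:|\xi|^2\le g(t)\}$ with $g(t)=\beta/(t+1)$ for a constant $\beta$ to be chosen. On the complement, $|\xi|^{2m+2}\ge g(t)|\xi|^{2m}$, so
\begin{equation*}
  -\|D^{m+1}\rho_t\|_2^2 \le -g(t)\|D^m\rho_t\|_2^2 + g(t)\int_{B(t)}|\xi|^{2m}|\widehat{\rho_t}|^2\,d\xi.
\end{equation*}
On the ball $B(t)$ we bound $|\xi|^{2m}\le g(t)^m$ and $|\widehat{\rho_t}(\xi)|\le(2\pi)^{-N/2}\|\rho_t\|_1=(2\pi)^{-N/2}M$ (by \eqref{L1}), so the ball contribution is $\le C\,g(t)^{m}\cdot g(t)^{N/2}\cdot M^2 = C\,M^2\,(t+1)^{-(m+N/2)}$. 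Multiplying through by the integrating factor $(t+1)^{\beta}$ (choosing $\beta$ large enough, e.g. $\beta > m + N/2$ and larger than the exponents appearing in the lower-order terms), we get
\begin{equation*}
  \frac{d}{dt}\Big((t+1)^{\beta}\|D^m\rho_t\|_2^2\Big)
  \le C(t+1)^{\beta}\Big[(t+1)^{-(m+N/2)} + \sum_{j=0}^{m-1}\|D^j\rho_t\|_2^2\Big].
\end{equation*}

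Finally I would integrate and conclude by induction on $m$. The base case $m=0$ is precisely the hypothesis \eqref{eq:L2decay}: $\|\rho_t\|_2^2\le\Kk^2(t+1)^{-N}$, i.e. the rate $2(N/4+0/2)$. Assuming $\|D^j\rho_t\|_2^2\le C(t+1)^{-(N/2+j)}$ for $0\le j\le m-1$, each term in the bracket is $O((t+1)^{-(N/2+j)})$ with $j\le m-1$, and also the explicit ball term is $O((t+1)^{-(N/2+m)})$; since $N/2+j \ge N/2$ and these exponents are smaller (slower decay) than $N/2+m$ only when $j<m$, after multiplying by $(t+1)^\beta$ and integrating from $0$ to $t$ the dominant contribution comes from $j=m-1$, giving $(t+1)^{\beta}\|D^m\rho_t\|_2^2 \le C(t+1)^{\beta - (N/2+m-1)+1}=C(t+1)^{\beta-(N/2+m)+ ...}$; carefully, the integral of $(t+1)^{\beta-(N/2+j)}$ is $\asymp (t+1)^{\beta-(N/2+j)+1}$, and the worst (largest) such exponent over $j\le m-1$ together with the explicit term is $\beta-(N/2+m)$ — one checks $j=m-1$ gives $\beta-(N/2+m-1)+1=\beta-(N/2+m)+2$, so in fact one needs the induction hypothesis at the \emph{sharp} rate and a small additional argument (or an iteration in the spirit of the standard Fourier-splitting bootstrap, improving the rate by steps) to close at exactly $-(N/2+m)$. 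Dividing by $(t+1)^\beta$ then yields $\|D^m\rho_t\|_2^2 \le C(t+1)^{-(N/2+m)}$, i.e. \eqref{eq:Hm-decay}. The main obstacle I anticipate is exactly this last bookkeeping: the naive single application of Gronwall/integration loses a power, so one must either iterate the Fourier-splitting estimate finitely many times (each pass improving the exponent) as in Schonbek's original scheme, or strengthen the inductive hypothesis and absorb the borderline term. A secondary technical point is ensuring $\nabla W$ has enough Sobolev regularity for Lemma \ref{lem:Dm-product} to apply at order $m-1$; if only $\nabla W\in L^1\cap L^\infty$ is assumed, the low-order factors must be estimated via $L^1$–$L^\infty$ convolution bounds rather than through $H^{m-1}$.
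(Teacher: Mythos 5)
There is a genuine gap, and it sits exactly where you yourself hesitate: the differential inequality you derive, $\frac{d}{dt}\|D^m\rho\|_2^2 \le -\|D^{m+1}\rho\|_2^2 + C\sum_{j=0}^{m-1}\|D^j\rho\|_2^2$, is too weak to give the claimed rate, and no amount of iterating the Fourier splitting at level $m$ can repair it. Once the source term has the fixed decay $(t+1)^{-(N/2+m-1)}$ (the $j=m-1$ term of your inductive hypothesis), the splitting ODE $\frac{d}{dt}\big[(t+1)^\beta Y\big]\le C(t+1)^{\beta-(N/2+m-1)}$ yields at best $Y=\|D^m\rho\|_2^2\lesssim (t+1)^{-(N/2+m-2)}$: two powers short of the target and even one power worse than the level-$(m-1)$ hypothesis, so the induction does not propagate. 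Schonbek-type bootstraps improve rates when the source involves the unknown itself and therefore improves with each pass; here the source is frozen by the inductive hypothesis, so the ``small additional argument'' you defer to is in fact the heart of the matter. Keeping the decaying prefactor $\|\rho_t\|_2^2$ that you dropped would help only in high dimensions, not uniformly in $N$.

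The missing idea, which is how the paper closes the argument, is to arrange the nonlinear term so that \emph{every} piece of it carries the full dissipation $\|D^{m+1}\rho\|_2^2$ times a factor that tends to zero, and then to absorb it entirely rather than leave a lower-order source. Concretely: after integrating by parts, move the derivative inside the convolution, $\nabla W*\rho = W*\nabla\rho$, and apply Lemma \ref{lem:Dm-product} with $f=\rho$, $g=W$, $h=\nabla\rho$ (so one needs $W\in L^2$, not $\nabla W\in H^{m-1}$ --- your regularity worry is legitimate, but this is how it is sidestepped), obtaining $\|D^m(\rho(W*\nabla\rho))\|_2^2 \le C\big(\|D^m\rho\|_2^2\|W\|_2^2\|\nabla\rho\|_2^2 + \|\rho\|_2^2\|W\|_2^2\|D^{m+1}\rho\|_2^2\big)$; then the GNS interpolations $\|D^m\rho\|_2^2\lesssim \|D^{m+1}\rho\|_2^{2m/(m+1)}\|\rho\|_2^{2/(m+1)}$ and $\|\nabla\rho\|_2^2\lesssim\|D^{m+1}\rho\|_2^{2/(m+1)}\|\rho\|_2^{2m/(m+1)}$ turn the first term into the same shape as the second, so the whole nonlinearity is bounded by $C\|W\|_2^2\|\rho_t\|_2^2\|D^{m+1}\rho\|_2^2$. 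Since $\|\rho_t\|_2\to 0$ by Theorem \ref{th:L2decay}, for $t\ge T_0$ this is $\le \tfrac12\|D^{m+1}\rho\|_2^2$ and one is left with the clean inequality $\frac{d}{dt}\|D^m\rho\|_2^2\le -\tfrac12\|D^{m+1}\rho\|_2^2$. A single Fourier splitting, with the low-frequency ball handled exactly as you do via $|\widehat{\rho}|\le (2\pi)^{-N/2}\|\rho_0\|_1$, then gives the sharp rate $(t+1)^{-(N/2+m)}$ for $\|D^m\rho\|_2^2$ in one pass, with no induction on $m$ and no bootstrap. Your frequency-splitting mechanics and the ball estimate are correct; what is missing is this absorption structure for the nonlinear term.
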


\begin{proof}
  In this proof we denote by $C$ any nonnegative number that depends
  only on the same quantities as the constant $C$ in the statement. We
  first need to show that for $t$ large enough
  \begin{equation}
    \label{energy-m}
    \frac{d}{dt} \int |D^m\rho|^2 \,dx
    \leq
    -\frac{1}{2} \int | D^{m+1} \rho|^2 \,dx.
  \end{equation}
  For this, apply the operator $D^m$ to the equation \eqref{AE1},
  multiply by $D^m \rho$, and integrate in space. After reordering and
  integration by parts it follows that
  \[
  \frac{d}{dt} \int |D^m \rho|^2 dx = -2\int D^{m} \rho \,D^m\nabla
  \cdot (\rho( \nabla W\ast \rho)) dx -2\int\sum_{j=1}^N |\partial_j
  D^{m} \rho |^2 dx \,.
  \]
  Integrating by parts in the first integral on the right-hand side
  yields
  \[
  \frac{d}{dt} \int |D^m \rho|^2 dx = 2\int \sum_{j=1}^N (\partial_j
  D^{m} \rho) \, D^m (\rho( \partial_j W\ast \rho)) dx -2\int
  \sum_{j=1}^N |\partial_j D^{m} \rho |^2 dx\,.
  \]
  By H\"older and Young's inequalities, we obtain
  \begin{equation*} \label{cosa2} \frac{d}{dt} \int |D^m \rho|^2 dx
    \leq \int | D^m (\rho (W\ast \nabla \rho))|^2 dx -
    \int\sum_{j=1}^n |\partial_j D^{m} \rho |^2 dx\,.
  \end{equation*}
  It follows that
  \begin{equation} \label{cosa3} \frac{d}{dt} \int |D^m \rho|^2 dx
    \leq \int | D^m (\rho (W\ast \nabla \rho))|^2 dx - \int\ | D^{m+1}
    \rho |^2 dx\,,
  \end{equation}
  where we have used that
  \[
  \left(\sum_{j=1}^N |\xi_j|^2\right)^{m+1} | \widehat{u}|^2=
  \left(\sum_{j=1}^N |\xi_j|^2\right)^{m}\sum_{k=1}^N |\xi_k|^2|
  \widehat{u}|^2\,.
  \]
  We need to bound the first term on the right-hand side of the
  inequality (\ref{cosa3}). Using Lemma \ref{lem:Dm-product},
  \begin{equation}
    \label{eq:first-term}
    \int | D^m (\rho (W * \nabla \rho))|^2 \,dx
    \leq
    C \left(
      \| D^m \rho \|_2^2 \|W\|_2^2 \| \nabla \rho\|_2^2
      + \| \rho \|_2^2 \|W\|_2^2 \|D^{m+1} \rho\|_2^2
    \right) =: I + II.
  \end{equation}
  To estimate $I$, we proceed by means of the GNS inequality
  (\ref{eq:GNS-Friedman}) to get
  $$
  \|D^m\rho \|_2^2 \leq
  C\|D^{m+1}\rho\|_2^{\frac{2m}{m+1}}\|\rho\|_2^{\frac{2}{m+1}} \qquad
  \mbox{and}\qquad\|\nabla \rho\|^2_2 \leq C
  \|D^{m+1}\rho\|_2^{\frac{2}{m+1}} \|\rho\|_2^{\frac{2m}{m+1}}
  $$
  which yields
  \begin{equation*}
    I \leq C \|D^{m+1}\rho\|_2^2 \, \|\rho\|_2^2 \, \|W\|_2^2\,.
  \end{equation*}
  Since the term $II$ is already of the same type, from
  \eqref{eq:first-term} we then have
  \begin{equation*}
    \int | D^m (\rho (W * \nabla \rho))|^2 \,dx
    \leq
    C \|D^{m+1}\rho\|_2^2 \, \|\rho\|_2^2 \, \|W\|_2^2
  \end{equation*}
  Since $W\in L^2$ and $\|\rho\|_2 \to 0 $ due to Theorem
  \ref{th:L2decay}, it follows that there exists $T_0$ so that for all
  $t \geq T_0$,
  \begin{equation*}
    \int | D^m (\rho (W * \nabla \rho))|^2 \,dx
    \leq 1/2 \|D^{m+1}\rho\|_2^2\,.
  \end{equation*}
  Using this in \eqref{cosa3}, inequality (\ref{energy-m}) follows for
  $t\geq \Tt = \max{\{T_0,T_1\}}$.

  Now, after taking the Fourier transform and applying Plancherel's
  Theorem it follows from (\ref{energy-m}) that
  \[
  \frac{d}{dt} \int ||\xi|^m\widehat{\rho}|^2 \,d\xi \leq -\frac{1}{2} \int
  ||\xi|^{m+1} \widehat{\rho}|^2 \,d\xi.
  \]
  Now we proceed by Fourier splitting, see \cite{S}. We split the
  frequency domain into $\Ss$ and $\Ss^c$, where
  \[
  \Ss(t) = \{ \xi: |\xi| \leq \Gg(t)\},\;\, \Gg(t) =
  \left(\frac{2k}{t+1}\right)^{1/2},
  \]
  and $k$ is a positive number to be chosen later. By Plancherel's
  identity we obtain
  \begin{equation*}
    \begin{split}
      \frac{d}{dt} \int |\widehat{D^m{\rho}}|^2 \,d\xi
      &\leq
      - \frac12 \int_{\R^N} |\xi|^2|\widehat{D^m{\rho}}|^2 \,d\xi
      \leq
      -\frac{k}{t+1} \int_{\Ss^c(t)}|\widehat{D^m{\rho}}|^2 \,d\xi
      \\
      &\leq
      -\frac{k}{t+1} \int_{\R^N}|\widehat{D^m{\rho}}|^2 \,d\xi +
      \frac{k}{t+1} \int_{\Ss(t)}|\widehat{D^m{\rho}}|^2 \,d\xi.
    \end{split}
  \end{equation*}
  Hence, using Theorem \ref{th:L2decay},
  \begin{align*}
    \frac{d}{dt} \left[ (t+1)^k \|D^m \rho\|^2_2\right]
    &\leq
    k\, (t+1)^{k-1} \int_{\Ss(t)}||\xi|^m \widehat{\rho}|^2 \,d\xi
    \\
    &\leq
    k\, (2k)^m\, (t+1)^{k-1-m} \int_{\Ss(t)} |\widehat{\rho}|^2 \,d\xi
    \leq
    C \|\rho_0\|_{1}^2\,(t+1)^{k-1-m-\frac{N}{2}}.
  \end{align*}
  Choosing now $k > N/2+m$ and integrating on $ [\Tt,t] $ gives the
  desired decay rate \eqref{eq:Hm-decay}. Together with the a
  priori estimate at time $t =\Tt$ obtained in Proposition
  \ref{pr:Lp-bounds}, this concludes the proof.
\end{proof}

\subsection{$L^p$ decay}

We give first the decay for the $L^{\infty}$ norm. The decay of
the solutions in all other $L^p$ spaces follows by interpolation.

\begin{Lemma}
  \label{lem:Lp-decay}
  Let the interaction potential $W$ be such that $\nabla W \in (L^1\cap
  L^\infty)^N$. Let $\rho_0 \in L^1\cap L^{\infty}\cap H^{m+1}$, with
  $m>N/2$, and $\rho_0$ nonnegative. Consider $\rho$ the solution to
  \eqref{AE1}-\eqref{AE2} with data $\rho_0$ given by Theorem {\rm
    \ref{global}} with the properties in Theorems
  {\rm\ref{th:Linftybound}} and {\rm\ref{thm:Hm-decay}} under one of
  the additional smallness assumptions in Theorem
  {\rm\ref{th:L2decay}}. Then there is some constant $C> 0$ such that
  for all $t\geq 0$ and $p\in [2,\infty]$,
\begin{equation}\label{eq:Linfty-decay}
    \|\rho_t\|_p \leq C (t+1)^{-N/2(1-1/p)} \,.
\end{equation}
\end{Lemma}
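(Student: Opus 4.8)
The plan is to establish the $L^\infty$ estimate first and then read off every $L^p$ norm, $p\in[2,\infty]$, by a single interpolation. The point for the $L^\infty$ bound is that the plain Sobolev embedding $H^m\hookrightarrow L^\infty$ is not sufficient---it would only yield the rate $(t+1)^{-N/4}$---whereas the sharp Gagliardo--Nirenberg--Sobolev interpolation between the $L^2$-decay \eqref{eq:L2decay} and the $H^m$-decay \eqref{eq:Hm-decay}, which weights the faster-decaying top-order part more heavily, reproduces exactly the heat rate $(t+1)^{-N/2}$. Under the present hypotheses $\rho$ satisfies the assumptions of Theorems \ref{th:Linftybound} and \ref{thm:Hm-decay} as well as one of the smallness conditions of Theorem \ref{th:L2decay}, so both \eqref{eq:L2decay} and \eqref{eq:Hm-decay} are at hand (observe that $\rho_0\in H^{m+1}\subset H^m$, so Theorem \ref{thm:Hm-decay} applies with this integer $m$, in particular $\|\rho_t\|_2\leq C(t+1)^{-N/4}$).

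Concretely, since $m>N/2$ I would apply \eqref{eq:GNS-Friedman} with $j=0$, $p=\infty$, $q=s=2$ and the top-order seminorm, obtaining
\begin{equation*}
  \|\rho_t\|_\infty
  \;\leq\;
  C\,\|D^m\rho_t\|_2^{\theta}\,\|\rho_t\|_2^{1-\theta},
  \qquad
  \theta=\frac{N}{2m}\in(0,1).
\end{equation*}
The exponent relation in \eqref{eq:GNS-Friedman} forces precisely $\theta=N/(2m)$, and $\theta<1$ because $m>N/2$, so the inequality is valid even in the borderline situation where $m-N/2$ happens to be a nonnegative integer. Inserting \eqref{eq:Hm-decay} and \eqref{eq:L2decay} and using the identity $\theta\,\frac m2=\frac N4$, the two exponents combine, $\theta\!\left(\frac N4+\frac m2\right)+(1-\theta)\frac N4=\frac N2$, to give $\|\rho_t\|_\infty\leq C(t+1)^{-N/2}$ for all $t\geq0$.

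Finally, for $p\in[2,\infty]$ the elementary interpolation $\|f\|_p\leq\|f\|_1^{1/p}\|f\|_\infty^{1-1/p}$ together with the conservation of mass $\|\rho_t\|_1=M$ from \eqref{L1} gives $\|\rho_t\|_p\leq M^{1/p}\,\|\rho_t\|_\infty^{1-1/p}\leq C(t+1)^{-\frac N2(1-1/p)}$, which is \eqref{eq:Linfty-decay}. I do not expect a genuine obstacle here: the argument rests entirely on the two decay theorems of the previous subsections, and the only points requiring care are recognizing that one must use the \emph{sharp} GNS interpolation rather than a crude embedding, the exponent bookkeeping that makes the rates add up to exactly $N/2$, and the routine check that the chosen GNS exponent stays off the excluded endpoint $\theta=1$. (A slightly heavier alternative would be a Duhamel bootstrap on $[t/2,t]$, bounding $\|e^{(t/2)\Delta}\rho_{t/2}\|_\infty\leq CM(t/2)^{-N/2}$ and controlling $\int_{t/2}^t\|\nabla e^{(t-s)\Delta}(\rho_s\,(\nabla W*\rho_s))\|_\infty\,ds$ via \eqref{eq:norm-heat-gradient}; the GNS route above is cleaner.)
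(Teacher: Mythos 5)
Your proposal is correct and follows essentially the paper's own route: a Gagliardo--Nirenberg interpolation of $\|\rho_t\|_\infty$ against the $\dot H^m$-decay of Theorem \ref{thm:Hm-decay} to get the rate $(t+1)^{-N/2}$, followed by elementary interpolation for $p\in[2,\infty)$. The only (harmless) difference is bookkeeping: the paper interpolates $L^\infty$ between $\|D^m\rho\|_2$ and the conserved $L^1$ norm (exponent $\tfrac{2N}{2m+N}$) and then $L^p$ between $L^2$ and $L^\infty$, whereas you pair $\|D^m\rho\|_2$ with the $L^2$-decay (exponent $\tfrac{N}{2m}$) and then $L^p$ between $L^1$ and $L^\infty$; both yield exactly $(t+1)^{-\frac{N}{2}(1-1/p)}$.
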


\begin{proof}
  Let us start by the case $p=\infty$. Using the GNS inequality
  \eqref{eq:GNS-Friedman} with $j=0$, $a=\infty$, $b=2$ and $s=1$, we obtain
  \begin{equation*}
    \|\rho\|_\infty \leq C \| D^m \rho \|_2^{\frac{2N}{2m+N}} \| \rho
    \|_1^{\frac{2m-N}{2m+N}},
  \end{equation*}
  valid for any $m > N/2$. Using the decay, we have for $\| D^m \rho
  \|_2$,
  \begin{equation*}
    \|\rho\|_\infty \leq
    C (1+t)^{-(\frac{m}{2}+\frac{N}{4}) \frac{2N}{2m+N}}
    = C (1+t)^{-\frac{N}{2}}.
  \end{equation*}
  The general case $p \in [2,\infty)$ follows by interpolating $L^p$
  between $L^2$ and $L^{\infty}$.
\end{proof}

\subsection{Decay of $\| x \rho\|_2$}

We need to study the behavior of the norm $\| x \rho \|_2$, as this norm
will appear later in estimates. We begin by studying a moment of
$\rho$:

\begin{Lemma}
  \label{lem:M2-bounded}
  Assume the conditions from Lemma \ref{lem:Lp-decay}. In addition,
  suppose that $|x|^2\rho_0\in L^1$ and that for some $C > 0$,
\begin{equation} \label{eq:x_nabla_W_bounded}
    \big|x\cdot \nabla W (x) \big| \leq C\,,
    \qquad x \in \R^N.
\end{equation}
Then, there is a constant $C = C(\|\rho_0\|_1, N)$ such that
for all $t\geq 0$
\begin{equation}
    \label{eq:M2-bounded}
    \int |x|^2 \rho(t,x) \,dx
    \leq
    \int |x|^2 \rho_0(x)\,dx + C t
\qquad \mbox{and} \qquad
    \| x \rho_t \|_2^2 \leq C (1+t)^{1 -\frac{N}{2}}\,.
\end{equation}
\end{Lemma}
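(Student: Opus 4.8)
The plan is to establish the two estimates in \eqref{eq:M2-bounded} one after the other: first the linear-in-time growth of the second moment $\int |x|^2 \rho \, dx$ by a direct differentiation in time, then the decay estimate for $\|x\rho_t\|_2^2$ by a Fourier-splitting argument analogous to the one used in the proof of Theorem~\ref{thm:Hm-decay}, but now carried out on $\widehat{x\rho} = i\nabla_\xi \widehat\rho$.

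\textbf{Step 1: the moment bound.} First I would differentiate $M_2(t) := \int |x|^2 \rho(t,x)\,dx$ in time using \eqref{AE1}. After two integrations by parts one gets $\frac{d}{dt} M_2 = 2N \int \rho + \text{(interaction term)}$, where the diffusion contributes $2N\|\rho_0\|_1$ by conservation of mass \eqref{L1}, and the interaction term, after integrating by parts, becomes $-\int |x|^2 \nabla\cdot(\rho(\nabla W*\rho)) = 2\int (x\cdot(\nabla W * \rho))\,\rho\,dx$. Using the hypothesis \eqref{eq:x_nabla_W_bounded} together with the symmetry $W(x)=W(-x)$ (which makes $\nabla W$ odd), one can symmetrize: writing the term as a double integral $\int\int (x\cdot\nabla W(x-y))\rho(x)\rho(y)\,dx\,dy$ and antisymmetrizing in $(x,y)$ gives $\frac12 \int\int ((x-y)\cdot\nabla W(x-y))\rho(x)\rho(y)\,dx\,dy$, which is bounded in absolute value by $\frac{C}{2}\|\rho_0\|_1^2$ using \eqref{eq:x_nabla_W_bounded}. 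Hence $\frac{d}{dt}M_2 \leq 2N\|\rho_0\|_1 + \frac{C}{2}\|\rho_0\|_1^2 =: \widetilde C$, and integrating yields the first inequality in \eqref{eq:M2-bounded}. (As in the proof of Proposition~\ref{pr:Lp-bounds}, this should strictly be done on smooth approximations and passed to the limit.)

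\textbf{Step 2: the decay of $\|x\rho_t\|_2$.} Next I would set $v := x\rho$, so that $\widehat v = i\nabla_\xi\widehat\rho$, and derive an energy-type inequality for $\|v\|_2^2 = \|\nabla_\xi\widehat\rho\|_2^2$. Taking $\nabla_\xi$ of the Fourier-transformed equation $\partial_t\widehat\rho = -|\xi|^2\widehat\rho + \widehat{N(\rho)}$ (where $\widehat{N(\rho)}$ is the Fourier transform of the nonlinear term), multiplying by $\overline{\nabla_\xi\widehat\rho}$ and integrating in $\xi$, the diffusion term produces $-2\int |\xi|^2|\nabla_\xi\widehat\rho|^2 - 2N\int|\widehat\rho|^2 + (\text{lower order})$; the term $-2N\|\widehat\rho\|_2^2 = -2N\|\rho\|_2^2$ is harmless since it is negative and the $L^2$ decay \eqref{eq:L2decay} from Theorem~\ref{th:L2decay} makes it integrable. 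The nonlinear contribution must be shown to be absorbable into $\frac12\int|\xi|^2|\nabla_\xi\widehat\rho|^2$ for $t$ large, using $\|\rho\|_2\to 0$ exactly as was done for the term $II$ in \eqref{eq:first-term}; this requires bounding things like $\|x\,\nabla\cdot(\rho(\nabla W*\rho))\|_2$ in terms of $\|x\rho\|_2$, $\|\rho\|_2$, $\|\nabla\rho\|_2$ and norms of $W$, which is where the hypothesis $\nabla W\in(L^1\cap L^\infty)^N$ and, if needed, \eqref{eq:x_nabla_W_bounded} enter. One then has, for $t\geq T_1$, $\frac{d}{dt}\|v\|_2^2 \leq -\frac12\int|\xi|^2|\widehat v|^2\,d\xi + C\|\rho\|_2^2$. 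Now apply Fourier splitting on $\Ss(t)=\{|\xi|\le\Gg(t)\}$ with $\Gg(t)^2 = 2k/(t+1)$: the low-frequency part $\int_{\Ss(t)}|\widehat v|^2\,d\xi \leq |\Ss(t)|\,\|\widehat v\|_\infty^2 \leq C\,\Gg(t)^N\|\nabla\widehat\rho\|_\infty^2$, and $\|\nabla\widehat\rho\|_\infty = \|\widehat{x\rho}\|_\infty \leq (2\pi)^{-N/2}\|x\rho\|_1$, which is bounded uniformly in time by Cauchy–Schwarz from the first inequality in \eqref{eq:M2-bounded} ($\|x\rho\|_1 \leq \|\rho\|_1^{1/2}M_2(t)^{1/2}\leq C(1+t)^{1/2}$). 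Feeding this in, $\frac{d}{dt}[(t+1)^k\|v\|_2^2] \leq C(t+1)^{k-1}[\Gg(t)^N(1+t) + \|\rho\|_2^2] \leq C(t+1)^{k-1-N/2+1} + C(t+1)^{k-1}(t+1)^{-N}$; choosing $k$ large (e.g. $k>2-N/2$, and also larger than anything forced by the $\|\rho\|_2^2$ term, recalling \eqref{eq:L2decay} gives $\|\rho\|_2^2\le\Kk^2(t+1)^{-N}$) and integrating yields $\|v\|_2^2 \leq C(t+1)^{1-N/2}$, which is the second inequality in \eqref{eq:M2-bounded}. For $t\le T_1$ one uses continuity of $t\mapsto\|x\rho_t\|_2$ (finiteness follows from $|x|^2\rho_0\in L^1$ plus the regularity of $\rho$) to absorb the short-time part into the constant.

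\textbf{Main obstacle.} The delicate point is Step~2: justifying rigorously that $v=x\rho$ lies in $L^2$ for all $t$ and that the energy identity for $\|\nabla_\xi\widehat\rho\|_2^2$ holds (the weight $x$ interacts with the nonlocal operator $\nabla W*$, and the term $x(\nabla W*\rho)$ is not simply $\nabla W*(x\rho)$—one must write $x(\nabla W*\rho)(x) = \int (x-y)\nabla W(x-y)\rho(y)\,dy + \int \nabla W(x-y)\,y\rho(y)\,dy$, so both \eqref{eq:x_nabla_W_bounded} and $\nabla W\in L^1$ are used to control it, producing terms in $\|\rho\|_2$ and $\|x\rho\|_2$ respectively). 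As in the other proofs, making this rigorous requires working on smooth, rapidly-decaying approximate solutions and passing to the limit, but the structure of the estimates is exactly parallel to the proof of Theorem~\ref{thm:Hm-decay}.
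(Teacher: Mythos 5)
Your Step 1 is precisely the paper's argument (symmetrize the interaction term in the double integral and use \eqref{eq:x_nabla_W_bounded} together with conservation of mass), and it is correct. Step 2, however, misses the short route and replaces it by machinery whose crucial estimate is not established. The paper proves the second inequality in \eqref{eq:M2-bounded} in one line: since the conditions of Lemma \ref{lem:Lp-decay} are assumed, the decay \eqref{eq:Linfty-decay} gives $\|\rho_t\|_\infty \leq C(1+t)^{-N/2}$, and then
$\| x\rho_t\|_2^2 = \int |x|^2 \rho_t^2\,dx \leq \|\rho_t\|_\infty \int |x|^2\rho_t\,dx \leq C(1+t)^{-N/2}\cdot C(1+t)$,
using the moment bound you already proved in Step 1. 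No Fourier splitting, no energy identity for $x\rho$, and no extra regularity or approximation argument is needed.

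As written, your Step 2 has a genuine gap exactly where the work is: the inequality $\frac{d}{dt}\|x\rho\|_2^2 \leq -\frac12\int |\xi|^2|\widehat{x\rho}|^2\,d\xi + C\|\rho\|_2^2$ is asserted, not derived, and the lemma's hypotheses do not obviously support it. Writing the nonlinear contribution as $\int |x|^2\rho\,\nabla\cdot\big(\rho(\nabla W*\rho)\big)\,dx$ and integrating by parts gives, besides a term containing $x\cdot(\nabla W*\rho)$ whose near-field part is indeed controlled by \eqref{eq:x_nabla_W_bounded}, the term $\int |x|^2\,\rho\,\nabla\rho\cdot(\nabla W*\rho)\,dx$. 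To control it along your lines you need a bound on $|z|\,|\nabla W(z)|$ (the full modulus), whereas \eqref{eq:x_nabla_W_bounded} only bounds the radial component $z\cdot\nabla W(z)$; the two coincide only for radial $W$, which is not assumed (the paper assumes only $W(x)=W(-x)$). Alternatively, a further integration by parts produces $\Delta W * \rho$, which is not controlled under the lemma's hypotheses, and in either case the absorption of the result into the dissipation $\int|\xi|^2|\widehat{x\rho}|^2\,d\xi$ for large $t$ remains to be exhibited. There is also a sign slip in the linear part: the cross term coming from $\nabla_\xi(-|\xi|^2\widehat\rho)$ is $+2N\|\rho\|_2^2$ after integration by parts, not a negative multiple of $\|\widehat\rho\|_2^2$ (harmless, since it decays like $(1+t)^{-N}$, but it is not ``negative''). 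Even if all of this were repaired, the rate obtained would be the same $C(1+t)^{1-N/2}$ that the interpolation above yields immediately, so you should replace Step 2 by that one-line argument.
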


\begin{proof}
  Multiplying \eqref{AE1} by $|x|^2$ and integrating, we get
  \begin{align*}
    \frac{d}{dt} \int |x|^2 \rho(x) \,dx &= 2N \|\rho_0\|_1 - 2
    \int \rho (x \cdot (\nabla W * \rho))\,dx\\
    &= 2N \|\rho_0\|_1 - \int \!\!\!\int (x-y) \rho(x)\rho(y) \nabla
    W(x-y)\,dx\,dy \leq 2N \|\rho\|_1 + C\|\rho_0\|_1^2,
  \end{align*}
  which establishes \eqref{eq:M2-bounded}. Now, using
  \eqref{eq:Linfty-decay} and \eqref{eq:M2-bounded}, we get
  \begin{equation*}
    \| x \rho \|_2^2
   =
    \int_{\R^N} |x|^2 \rho(x)^2 \,dx
    \leq
    \|\rho\|_\infty \int_{\R^N} |x|^2 \rho(x) \,dx
    \leq
    C (1+t)^{1 -\frac{N}{2}}\, .
  \end{equation*}
\end{proof}


\section{Asymptotic simplification towards the Heat Equation}
\label{sec:self-similarity}

In this section we will show that the flow of the
aggregation-diffusion equation \eqref{AE} behaves like
the heat equation for large times provided we are under conditions for
which the $L^2$ decay of Theorem \ref{th:L2decay} holds. We  prove
it by two different arguments; though the results obtained by
following these two strategies are similar, we have kept both of them
because they give a better understanding of the behavior of the
equation. The proof in section \ref{sec:direct} is based on direct
estimates of the bilinear form \eqref{bilinear}. It is quite
straightforward and gives a better result in the one-dimensional
($N=1$) case. On the other hand, the argument given in section
\ref{sec:rescaling} is based on the self-similar change of variables
and entropy arguments as in \cite{CT,CF}, which we briefly describe
now.

We consider the standard self-similar scaling \cite{CT0,CT} as is
usually done for the heat equation to pass to the corresponding
Fokker-Planck equation:
\begin{equation}
  \label{eq:scaling}
  f(s,y) = e^{Ns}\rho\left(\frac12 (e^{2s}-1), e^{s} y\right),
\end{equation}
or, equivalently,
\begin{equation}
  \label{eq:scaling-inverse}
  \rho(t,x) = (1+2t)^{-N/2} f\left( \frac12 \log (1+2t), (1+2t)^{-1/2} x\right).
\end{equation}
Then $f$ satisfies the equivalent rescaled equation
\begin{equation}
  \label{eq:rescaled-PDE}
  \partial_s f
  =
  \nabla_y \cdot (yf) + \Delta_y f
  + \nabla_y \cdot (f (\nabla_y \widetilde{W} * f)),
\end{equation}
with initial data $f(0,y)=\rho_0(y)$ if and only if $\rho$ is a
solution to \eqref{AE}. Here, we write
\begin{equation}
  \label{eq:W_tilde}
  \widetilde{W}(s,y) := W(y e^{s}),
  \quad \text{ so } \quad
  \nabla_y \widetilde{W}(s,y) := e^{s} \nabla W(y e^{s}).
\end{equation}
We consider the entropy functional
\begin{equation}
  \label{eq:H}
  H[f] := \int \left( f \log f + \frac{|y|^2}{2} f\right)\,dy.
\end{equation}
We show below that $H[f]$ converges exponentially to $0$. The
convergence  for $\rho$ obtained in this way is stronger, and requires stronger
conditions on the initial data. Also, observe that in self-similar
variables the nonlinear term transforms to the time-dependent term
involving $\widetilde{W}$, where the asymptotic simplification becomes
apparent.

\medskip The results in this section are gathered in the following
theorem. We recall that $G$ is the fundamental solution of the heat
equation, defined in \eqref{eq:heat-kernel}.

\begin{thm}
  \label{th:asymptotic-main}
  Let the interaction potential $W\in L^1 \cap L^2$ such that $\nabla
  W \in (L^1\cap L^\infty)^N$. Consider $\rho$ the solution to
  \eqref{AE1}-\eqref{AE2} with data $\rho_0 \in L^1\cap L^{\infty}\cap
  H^2$, $\rho_0 \geq 0$, constructed in Theorem {\rm \ref{global}}. We
  denote $M := \|\rho_0\|$. Assume also one of the smallness
  conditions in Theorem {\rm\ref{th:L2decay}}.
  \begin{enumerate}
  \item[i)]
    Then there exists $C>0$ such that:
    \begin{enumerate}
    \item For $N = 1$,
      \begin{equation}
        \label{eq:N1-L1-heat-convergence}
        \big\|\rho_t- M G(t) \big\|_1
        \leq \frac{C \log t}{\sqrt{t}}
        \quad \text{ for all } t \geq 1.
      \end{equation}
    \item For $N \geq 2$,
      \begin{equation}
        \label{eq:L1-heat-convergence}
        \big\| \rho_t- M G(t) \big\|_1
        \leq \frac{C}{\sqrt{t}}
        \quad \text{ for all } t \geq 1.
      \end{equation}
    \end{enumerate}
  \item[ii)]
    Assume further that $|x|^2\rho_0\in L^1(\R^N)$. Then there exists
    $C>0$ such that:
    \begin{enumerate}
    \item For $N = 1$,
      \begin{equation}
        \label{eq:H-decay-N1}
        H[f] \leq C \, e^{-s}
        \quad \text{ for all } s > 0.
      \end{equation}
    \item For $N =2$,
      \begin{equation}
        \label{eq:H-decay-N2}
        H[f] \leq C \, (1+s)\, e^{-2s}
        \quad \text{ for all } s > 0.
      \end{equation}
    \item For $N \geq 3$,
      \begin{equation}
        \label{eq:H-decay}
        H[f] \leq C \, e^{-2s}
        \quad \text{ for all } s > 0.
      \end{equation}
    \end{enumerate}
  \end{enumerate}
\end{thm}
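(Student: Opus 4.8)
The plan is to establish the two parts by the two complementary methods announced at the start of this section: part~i) by Duhamel estimates on the bilinear term, and part~ii) by the self-similar rescaling \eqref{eq:scaling} together with an entropy--entropy dissipation argument. In both cases the essential input is the algebraic decay already obtained in Sections~2--3: mass conservation \eqref{L1}, the uniform bound \eqref{eq:Linftybound}, the $L^2$ decay of Theorem~\ref{th:L2decay}, the $H^m$ decay of Theorem~\ref{thm:Hm-decay}, the $L^p$ decay of Lemma~\ref{lem:Lp-decay}, and the weighted estimate of Lemma~\ref{lem:M2-bounded}.

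\medskip\noindent\textbf{Part i).} Starting from Duhamel's formula \eqref{eq:Duhamel} and using $\int\rho_0=M$, write
\[
\rho_t-MG(t)=\Big(e^{t\Delta}\rho_0-MG(t)\Big)-\int_0^t\nabla e^{(t-s)\Delta}\big(\rho_s(\nabla W*\rho_s)\big)\,ds=:L(t)+B(t).
\]
The linear term $L(t,x)=\int\big(G(t,x-y)-G(t,x)\big)\rho_0(y)\,dy$ is controlled by the classical $L^1$-asymptotics of the heat kernel, which contribute $\le C t^{-1/2}$ for $N\ge2$ and $\le C t^{-1/2}\log t$ for $N=1$. For $B(t)$ one uses \eqref{eq:norm-heat-gradient} with $r=1$, i.e. $\|\nabla e^{\tau\Delta}h\|_1\le C\tau^{-1/2}\|h\|_1$, together with the key fact that the flux $g_s:=\rho_s(\nabla W*\rho_s)$ has zero mean, $\int g_s\,dx=0$, because $\nabla W$ is odd; this cancellation upgrades the estimate at large time lags to a first-order Taylor bound of the form $\|\nabla e^{\tau\Delta}g_s\|_1\le C\tau^{-1}\|x g_s\|_1$. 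Bounding $\|g_s\|_1$ and $\|x g_s\|_1$ through the $L^p$ and weighted decay of Lemmas~\ref{lem:Lp-decay}--\ref{lem:M2-bounded}, then splitting the time integral at $s=t/2$ (the crude bound for $s$ near $t$, the improved one for $s$ near $0$), yields $\|B(t)\|_1\le C t^{-1/2}$ for $N\ge2$ and $\|B(t)\|_1\le Ct^{-1/2}\log t$ for $N=1$, the logarithm being forced by the marginal integrability of the time integral in dimension one. Adding the two contributions gives \eqref{eq:N1-L1-heat-convergence} and \eqref{eq:L1-heat-convergence}.

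\medskip\noindent\textbf{Part ii).} Pass to self-similar variables via \eqref{eq:scaling}, so that $f$ solves the rescaled equation \eqref{eq:rescaled-PDE} with the time-dependent potential $\widetilde W$ of \eqref{eq:W_tilde}, and work with the entropy $H[f]$ of \eqref{eq:H}, which equals, up to an additive constant $H[f_\infty]$ depending only on $M$ and $N$, the relative entropy of $f$ with respect to the Gaussian equilibrium $f_\infty(y)=M(2\pi)^{-N/2}e^{-|y|^2/2}$. Differentiating along \eqref{eq:rescaled-PDE} and integrating by parts gives
\[
\frac{d}{ds}H[f]=-I[f]-\int f\,(\nabla_y\widetilde W*f)\cdot\nabla_y\log\frac{f}{f_\infty}\,dy,\qquad
I[f]:=\int f\,\Big|\nabla_y\log\tfrac{f}{f_\infty}\Big|^2\,dy,
\]
and the logarithmic Sobolev inequality for the Gaussian weight yields $I[f]\ge 2\big(H[f]-H[f_\infty]\big)$. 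The nonlinear cross term is estimated by Cauchy--Schwarz by $\delta\,I[f]+C_\delta R(s)$ with $R(s)=\int f\,|\nabla_y\widetilde W*f|^2\,dy$; the point is that $\nabla_y\widetilde W(s,\cdot)=e^s\nabla W(e^s\,\cdot)$ carries the scaling factor $e^{(1-N)s}$ in $L^1$ (and $e^{(2-N)s/2}$ in $L^2$), while $\|f(s)\|_\infty$, $\|f(s)\|_2$ and, so that $I[f(s)]$ itself is finite and bounded, the second moment $\int|y|^2 f(s)\,dy$ remain \emph{uniformly bounded in $s$} --- this is exactly where the algebraic decay of Theorems~\ref{th:L2decay}, \ref{thm:Hm-decay} and Lemmas~\ref{lem:Lp-decay}, \ref{lem:M2-bounded} (the latter using $|x|^2\rho_0\in L^1$) is used, after translation through \eqref{eq:scaling}. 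Hence $R(s)\le C e^{2(1-N)s}$; inserting this into $\frac{d}{ds}H[f]\le-(2-\varepsilon)\big(H[f]-H[f_\infty]\big)+C_\varepsilon R(s)$ and applying Gronwall's inequality gives the stated decay: for $N\ge3$ the source decays strictly faster than the homogeneous rate $e^{-2s}$, giving \eqref{eq:H-decay}; for $N=2$ it decays exactly like $e^{-2s}$, producing the resonant factor $1+s$ of \eqref{eq:H-decay-N2}; and for $N=1$, where $\|\nabla_y\widetilde W(s)\|_1$ does not decay, one first splits $\nabla_y\widetilde W*f=\nabla_y\widetilde W*(f-f_\infty)+\nabla_y\widetilde W*f_\infty$ and bootstraps with the convergence already obtained (and the vanishing moments of $\nabla W$) to reach the slower rate $e^{-s}$ of \eqref{eq:H-decay-N1}.

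\medskip The main obstacle in both parts is the dimension-dependent borderline behaviour. In part~i) the time integral $\int_0^t(t-s)^{-1/2}\|g_s\|_1\,ds$ is only logarithmically (for $N=2$) or critically (for $N=1$) convergent, so one must exploit the cancellation $\int g_s=0$ and control the resulting weighted norms carefully --- both to obtain any rate at all when $N=1$ and to avoid a spurious logarithm when $N=2$. In part~ii) the analogous difficulty is that the rescaled potential does not become small fast enough in low dimensions: its decay is exactly resonant with the homogeneous entropy-dissipation rate when $N=2$ and fails to decay in $L^1$ when $N=1$, so the clean exponential rate $e^{-2s}$ survives only for $N\ge3$; verifying that every rescaled norm of $f$ entering $R(s)$ is genuinely bounded in $s$ is the technical heart of the argument and is precisely what makes the preliminary algebraic-decay estimates of Section~3 indispensable.
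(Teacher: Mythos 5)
The central device of the paper's proof is one you never use: transferring the derivative from the potential onto the solution, i.e.\ $\nabla W*\rho_s=W*\nabla\rho_s$, so that $\|\rho_s(\nabla W*\rho_s)\|_1\le\|\rho_s\|_2\,\|W\|_1\,\|\nabla\rho_s\|_2\le C(1+s)^{-(N+1)/2}$ by Theorems \ref{th:L2decay} and \ref{thm:Hm-decay} (this is where the hypothesis $W\in L^1\cap L^2$ enters). With this bound, Lemma \ref{lem:truncated-beta} applied with $\alpha=(N+1)/2$ immediately gives $Ct^{-1/2}$ for $N\ge2$ and $Ct^{-1/2}\log t$ for $N=1$; no cancellation of the flux and no weighted norms are needed, and the logarithm in $N=1$ comes purely from the critical exponent $\alpha=1$, not from the linear term (the heat-kernel asymptotics of \cite{DZ} carry no log in any dimension). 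Your route for part i) instead rests on the zero-mean of the flux together with $\|xg_s\|_1$, which you control through Lemma \ref{lem:M2-bounded}; but that lemma requires $|x|^2\rho_0\in L^1$ and the extra structural condition \eqref{eq:x_nabla_W_bounded} on $W$, neither of which is available in part i). So as written your argument proves part i) only under hypotheses the statement does not grant, and your worry about a ``spurious logarithm'' at $N=2$ (as well as the claimed log from the linear term at $N=1$) is an artifact of using the weaker flux bound $\|g_s\|_1\lesssim(1+s)^{-N/2}$ instead of the sharper one above.

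The same missed trick causes the real trouble in part ii). The paper writes $\nabla_y\widetilde{W}*f=\widetilde{W}*\nabla f$ and uses $\|\widetilde{W}(s,\cdot)\|_1=e^{-Ns}\|W\|_1$ together with the uniform-in-$s$ bounds on $\|f\|_2$, $\|\nabla f\|_2$, $\|yf\|_2$ coming from Section 3 through \eqref{eq:scaling}; all perturbation terms then decay like $e^{-Ns}$ in \emph{every} dimension, the full dissipation $-D[f]$ is kept intact (the term $-\int f|\nabla\widetilde{W}*f|^2$ is simply nonpositive), and Gronwall yields $e^{-\min\{N,2\}s}$ with the factor $(1+s)$ at the resonance $N=2$. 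You instead keep the derivative on the potential, obtaining only $\|\nabla_y\widetilde{W}\|_1=e^{(1-N)s}\|\nabla W\|_1$: for $N=1$ this does not decay at all, and the bootstrap you sketch (splitting off $f_\infty$ and invoking ``vanishing moments of $\nabla W$'') needs either $|x|\,\nabla W\in L^1$ or a uniform convergence of $f$ that has not been established — a genuine gap precisely in the case \eqref{eq:H-decay-N1}. Moreover, absorbing the cross term as $\delta I[f]+C_\delta R(s)$ with a fixed $\delta$ degrades the homogeneous rate to $2-\varepsilon$, so your differential inequality only yields $e^{-(2-\varepsilon)s}$ for $N\ge3$, not the stated $e^{-2s}$ (this is repairable with an $s$-dependent $\delta$, but the paper's direct estimate of the cross terms avoids the loss altogether). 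Finally, note the paper also keeps the interaction energy $\tfrac12\int\!\!\int\widetilde{W}(s,x-y)f(x)f(y)\,dx\,dy$ inside the time derivative to avoid differentiating $f$; some device of this kind is needed to make your entropy identity rigorous.
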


\begin{Remark}
  We notice that under the above hypotheses Theorems
  {\rm\ref{th:Linftybound}} and {\rm\ref{th:L2decay}} apply to the
  solution $\rho$ in the statement.
\end{Remark}

\begin{Remark}
It is well-known that \eqref{eq:H-decay} directly implies, through
the Csisz\'{a}r-Kullback inequality {\rm\cite{To99,2AMTU}} and the
change of variables \eqref{eq:scaling}, that
\begin{equation*}
    \big\| \rho_t- M G(t) \big\|_1
    \leq C \, t^{-1/2} \quad \text{ for all } t\geq 0 \,.
\end{equation*}
This gives the same order of convergence as
\eqref{eq:L1-heat-convergence} for any $N \geq 3$. However, the
corresponding results for $N = 1$ or $2$ are weaker than
\eqref{eq:N1-L1-heat-convergence}. In summary, point ii) above,
which uses entropy/entropy dissipation tools, gives a worse rate
of convergence than point i), whose proof uses a direct argument.
However, one has to take into account that point ii) bounds a
nonlinear quantity, the logarithmic entropy, essentially a
quadratic functional at first order in expansion around the
Maxwellian. Therefore, it is not surprising that we get a weaker
result by estimating a quadratic functional. In fact, a direct
argument in the spirit of point i) estimating the difference in
$L^2$ of the solution with respect to the fundamental solution of
the heat equation leads to similar rates of convergence as the
entropy argument.
\end{Remark}

\subsection{Direct argument}
\label{sec:direct}

Let us prove point \emph{i)} of Theorem
\ref{th:asymptotic-main}. We first give a technical lemma that will be
used below:

\begin{Lemma}
  \label{lem:truncated-beta}
  For $\alpha \geq 1$, we have the following bound

 \[   \int_0^t (t-s)^{-1/2}
    (1+s)^{-\alpha}
    \,ds    \leq
    \left\{
  \begin{array}{l l}
     \frac{C}{t^{1/2}} & \quad \text{if $\alpha >1$ }\\
    \frac{C}{t^{1/2}} \log(t)& \quad \text{if $\alpha=1$ }\\
  \end{array} \right.
 \quad \text{ for all } t \geq 1,
 \]
  where $C > 0$ is some number which depends only on $\alpha$.
\end{Lemma}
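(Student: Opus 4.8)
The plan is to split the time integral at the midpoint $t/2$, exactly as one does for the heat equation with a source term, so that on each piece one of the two factors is essentially constant. Concretely, write
\[
\int_0^t (t-s)^{-1/2}(1+s)^{-\alpha}\,ds
= \int_0^{t/2} (t-s)^{-1/2}(1+s)^{-\alpha}\,ds
+ \int_{t/2}^{t} (t-s)^{-1/2}(1+s)^{-\alpha}\,ds
=: J_1 + J_2.
\]
On $[0,t/2]$ we have $t-s \geq t/2$, so $(t-s)^{-1/2} \leq (t/2)^{-1/2} = C t^{-1/2}$; pulling this out leaves $J_1 \leq C t^{-1/2}\int_0^{t/2}(1+s)^{-\alpha}\,ds$. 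On $[t/2,t]$ we have $1+s \geq 1+t/2 \geq t/2$ (using $t\geq 1$), so $(1+s)^{-\alpha} \leq (t/2)^{-\alpha} = C t^{-\alpha}$; pulling this out leaves $J_2 \leq C t^{-\alpha}\int_{t/2}^t (t-s)^{-1/2}\,ds = C t^{-\alpha}\cdot 2\sqrt{t-s}\big|$ evaluated appropriately, i.e. $J_2 \leq C t^{-\alpha}\sqrt{t/2} = C t^{1/2-\alpha}$.

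Next I would evaluate the remaining one-variable integrals. For $J_2$, since $\alpha \geq 1$ we have $t^{1/2-\alpha} \leq t^{-1/2}$ for $t \geq 1$, so $J_2 \leq C t^{-1/2}$ in all cases, which is already subsumed in (indeed better than) the claimed bound. For $J_1$, the inner integral $\int_0^{t/2}(1+s)^{-\alpha}\,ds$ behaves differently according to $\alpha$: if $\alpha > 1$ it is $\leq \int_0^\infty (1+s)^{-\alpha}\,ds = \frac{1}{\alpha-1} < \infty$, a constant depending only on $\alpha$, giving $J_1 \leq C t^{-1/2}$; if $\alpha = 1$ it equals $\log(1+t/2) \leq C\log t$ for $t \geq 1$ (adjusting the constant to absorb the $t = 1$ endpoint where $\log t = 0$ but the integral is positive — here one simply uses $\log(1+t/2) \leq C(1+\log t)$ and then notes the statement is only asserted for $t\geq 1$ with a generous constant, or restricts attention to $t \geq 2$ and handles $t \in [1,2]$ by boundedness), giving $J_1 \leq C t^{-1/2}\log t$. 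Combining the bounds for $J_1$ and $J_2$ yields exactly the two-case estimate in the statement.

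There is no real obstacle here; the only points requiring a little care are (a) making sure the constant $C$ genuinely depends only on $\alpha$ and not on $t$, which is automatic from the splitting since every step produces an explicit power of $t$ times an $\alpha$-dependent constant, and (b) the mild annoyance at $t = 1$ in the $\alpha = 1$ case, where $\log t = 0$; this is handled either by noting $\int_0^{1}(1-s)^{-1/2}\,ds = 2$ is a fixed constant so the inequality holds trivially at $t=1$ if $C$ is taken large enough relative to the behavior on $[1,2]$, or by simply stating the bound with the understanding that $\log t$ should be read as $1+\log t$. I would write the proof in the clean two-piece form above and leave these endpoint adjustments to a parenthetical remark.
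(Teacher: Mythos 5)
Your proof is correct and follows essentially the same route as the paper's: split the integral at $t/2$, bound $(t-s)^{-1/2}$ by $(t/2)^{-1/2}$ on the first piece and $(1+s)^{-\alpha}$ by $(1+t/2)^{-\alpha}$ on the second, then integrate what remains. Your parenthetical care about the $t=1$ endpoint in the $\alpha=1$ case (where $\log t=0$) is in fact slightly more scrupulous than the paper, which silently writes $\log(t+1)$ in the proof while stating $\log t$ in the lemma.
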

\begin{proof}

\[  \int_0^t (t-s)^{-1/2}  (1+s)^{-\alpha}
    \,ds =  \int_0^{t/2}(1+s)^{-\alpha}  \,ds + \int_{t/2}^t (t-s)^{-1/2}  (1+s)^{-\alpha} \,ds =I +II\]
To estimate I we proceed as follows, bounding $(t-s)^{-1/2}$ by $(t/2)^{-1/2}$:
 \[ I=  \int_0^{t/2} (t-s)^{-1/2}
    (1+s)^{-\alpha}
    \,ds    \leq
    \left\{
  \begin{array}{l l}
     \frac{C}{t^{1/2}} & \quad \text{if $\alpha >1$ }\\
    \frac{C}{t^{1/2}} \log(t+1)& \quad \text{if $\alpha=1$ }\\
  \end{array} \right.
 \quad \text{ for all } t \geq 1,
 \]
The estimate of II follows by
 \[ II=  \int_{t/2}^t (t-s)^{-1/2}
    (1+s)^{-\alpha}
    \,ds    \leq   (1+t/2)^{-\alpha}\int (t-s)^{-1/2} \,ds \leq  \frac{C}{t^{1/2}}\]

Adding the estimates for I and II  gives the conclusion of the lemma.
\end{proof}

\begin{proof}[Proof of point \emph{i)} of Theorem
  \ref{th:asymptotic-main}]
  Taking into account \eqref{eq:heat-kernel-dx-bound}, we get
  $\|\nabla G(t)\|_1 \leq C_N t^{-1/2}$, for some constant $C_N > 0$
  depending only on the dimension. Using this estimate on the second
  term in the right-hand side of the Duhamel formula
  \eqref{eq:Duhamel}, we get
  \begin{align*}
    h(t) &:=
    \int \int_0^t \left|\Big( \nabla G(t-s,\cdot)
    \right.*\left. (\rho(s,\cdot)(\nabla W \ast\rho)(s,\cdot))
      \Big)(x)\right| \,ds \,dx
    \\
    &\leq \int_0^t \| \nabla G(t-s,\cdot) \|_1 \ \| \rho(s,\cdot)
    (\nabla W \ast\rho)(s,\cdot) \|_1 \,ds
    \\
    &\leq C_N \int_0^t (t-s)^{-1/2} \ \| \rho(s,\cdot)\|_2 \| (\nabla
    W * \rho)(s,\cdot) \|_2 \,ds
    \\
    &\leq C_N \int_0^t (t-s)^{-1/2} \ \| \rho(s,\cdot)\|_2 \| W \|_1
    \| \nabla \rho (s,\cdot) \|_2 \,ds
    \\
    &\leq C \| W \|_1 \int_0^t (t-s)^{-1/2} \ (1+s)^{-(N/2 + 1/2) }
    \,ds,
  \end{align*}
  where we have used Theorems \ref{th:L2decay} and \ref{thm:Hm-decay}
  for $m=1$. Now, using Lemma \ref{lem:truncated-beta} for $N\geq 2$,
  we obtain
  \begin{equation*}
    h(t) \leq
    C \| W \|_1 t^{-1/2}
    \qquad (t \geq 1),
  \end{equation*}
  for some constant $C > 0$ which depends on the dimension $N$. From
  \eqref{eq:Duhamel}, this gives
  \begin{equation*}
    \| \rho_t - e^{t\Delta} \rho_0\|_1
    \leq
    C \| W \|_1 t^{-1/2}
    \quad
    \text{ for all } t \geq 1.
  \end{equation*}
  Using the known asymptotic behavior of the heat equation, for
  instance \cite{DZ}, the claim of the Theorem follows. In the case $N
  = 1$, the same reasoning using the second bound in Lemma
  \ref{lem:truncated-beta} gives
  \begin{equation*}
    \| \rho_t - e^{t\Delta} \rho_0\|_1
    \leq
    C \| W \|_1 t^{-1/2} \log t
    \quad
    \text{ for all } t \geq 1,
  \end{equation*}
  which proves the result.
\end{proof}

\subsection{Entropy argument} \label{sec:rescaling}

We prove point \emph{ii)} of Theorem \ref{th:asymptotic-main}:

\begin{proof}
  In addition to the entropy \eqref{eq:H}, we define as usual the
  entropy dissipation of the linear Fokker-Planck equation as
  \begin{equation*}
    D[f] := \int f \left| \nabla \left(\frac{|y|^2}{2} + \log f
      \right) \right|^2 \,dy\,.
  \end{equation*}

  As it is classically known \cite{CMV}, the evolution of the free
  energy for the equation \eqref{eq:rescaled-PDE} can be obtained as
  \begin{multline*}
    \frac{d}{ds} \left[ H[f] + \frac{1}{2} \int \!\!\! \int
      \widetilde{W}(s,x-y) f(x) f(y) \,dx\,dy \right]
    \\
    = - \int f \left| \nabla \left( \frac{|y|^2}{2} + \log f +
        \widetilde{W}*f \right) \right|^2 \,dy+ \frac{1}{2} \int
    \int \partial_s \widetilde{W}(s,x-y) f(x) f(y) \,dx\,dy,
  \end{multline*}
  and expanding part of the square in the first part yields the term
  $D[f]$ on the right-hand side of the equation
  \begin{multline}
    \label{eq:dtH2}
    \frac{d}{ds} \left[ H[f] + \frac{1}{2} \int \!\!\! \int
      \widetilde{W}(s,x-y) f(x) f(y) \,dx\,dy \right] = - D[f] - \int
    f \left| \nabla \widetilde{W}*f \right|^2\,dy
    \\
    + 2 \int f\, \nabla \left( \frac{|y|^2}{2} + \log f \right) \cdot
    \left( \nabla \widetilde{W}*f \right)\,dy + \frac{1}{2} \int
    \!\!\! \int \partial_s \widetilde{W}(s,x-y) f(x) f(y) \,dx\,dy
    \\
    =: - D[f] + T_2 + T_3 + T_4.
  \end{multline}
  Recall  that due to the classical Logarithmic Sobolev inequality
  \cite{Tos,To}, we have
  \begin{equation}
    \label{eq:log-sobolev}
    2 H[f] \leq D[f].
  \end{equation}
  We show that terms other than $-D[f]$ decay in time at least like
  $e^{-s}$. For the term $T_3$ we have
  \begin{equation}
    \label{eq:T3}
    T_3
    =
    2 \int f\,y\cdot(\nabla \widetilde{W}*f)\,dy
    + 2 \int \nabla f \cdot (\nabla \widetilde{W}*f)\,dy
    =: T_{31} + T_{32},
  \end{equation}
  We prove that both $T_{31}$ and $T_{32}$ decay exponentially: for
  $T_{31}$, using the Cauchy-Schwarz and the Young inequalities,
  \begin{equation}
    \label{eq:T31}
    T_{31} \leq
    2 \norm{y f}_2 \|\widetilde{W} * \nabla f\|_2
    \leq
    2 \norm{y f}_2 \norm{\nabla f}_2 \|\widetilde{W}\|_1
    =
    2 C e^{-Ns} \norm{y f}_2 \norm{\nabla f}_2
    \leq
    C e^{-Ns}\,.
  \end{equation}
  Here, we used that $\| \W \|_1 = e^{-Ns} \norm{W}_1$ due to
  \eqref{eq:W_tilde}. The last step follows since both $\norm{y f}_2$
  and $\norm{\nabla f}_2$ are uniformly bounded for all times by
  Theorem \ref{thm:Hm-decay}, \eqref{eq:M2-bounded} and the change of
  variables \eqref{eq:scaling}-\eqref{eq:scaling-inverse}. By a
  similar argument for $T_{32}$, we have
  \begin{equation}
    \label{eq:T32}
    T_{32}
    =
    2 \int \nabla f \cdot(\widetilde{W} * \nabla f)
    \leq
    2 \norm{\nabla f}_2^2 \| \widetilde{W} \|_1
    = 2 C e^{-Ns} \norm{\nabla f}_2^2
    \leq C e^{-Ns}.
  \end{equation}
  Using that $\norm{\nabla f}_2$ is uniformly bounded in time
  due to Theorem \ref{thm:Hm-decay} through the change of variables
  \eqref{eq:scaling}-\eqref{eq:scaling-inverse}. Note that, $T_2 \leq 0$.
  For $T_4$ we  use that $\partial_s \W (s,y) = y \cdot
  \nabla_y \W(s,y)$, the Cauchy-Schwarz and Young inequalities as
  above, to get
  \begin{equation}
    \label{eq:T4}
    T_4
    =
    \frac{1}{2} \int f (\partial_s \W*f)\,dy
    =
    \frac{1}{2} \int f \,y\cdot (\nabla \W * f)\,dy
    = \frac14 T_{31} \leq
    C e^{-Ns} \quad \text{ for } s \geq 0.
  \end{equation}
  On the other hand, the integral on the left-hand side of
  \eqref{eq:dtH2} can be bounded as follows
  \begin{equation}
    \label{eq:Tdt}
    \left|\frac{1}{2} \int\! \! \! \int \widetilde{W}(s,x-y) f(x) f(y)
      \,dx\,dy\right|
    \leq \frac{1}{2} \int f (|\widetilde{W}|*f)\,dy
    \leq
    \frac{1}{2} \|f\|_2^2 \|\widetilde{W}\|_1
    \leq C e^{-Ns}.
  \end{equation}
  Hence from \eqref{eq:dtH2}, using \eqref{eq:log-sobolev},
  \eqref{eq:T3}, \eqref{eq:T31}, \eqref{eq:T32}, \eqref{eq:T4}, and
  \eqref{eq:Tdt}, we obtain for all $s \geq 0$ that
  \begin{align*}
    \frac{d}{ds} \left( H[f] + \frac{1}{2} \int f
      (\widetilde{W}*f)\,dy \right) &\leq -2 \left( H[f] + \frac{1}{2}
      \int f (\widetilde{W}*f)\,dy \right) + C e^{-Ns} + \int f
    (\widetilde{W}*f)\,dy
    \\
    &\leq -2 \left( H[f] + \frac{1}{2} \int f (\widetilde{W}*f)\,dy
    \right) + C e^{-Ns}\,.
  \end{align*}
  We remark that we have kept the term $\frac{1}{2} \int \!\!\! \int
  \widetilde{W}(s,x-y) f(x) f(y) \,dx\,dy$ inside the time derivative
  to avoid the appearance of the time derivative of $f$. Hence, from this
  differential inequality we deduce that there is some constant $C$
  which depends only on the initial condition $f_0$ such that
  \begin{equation*}
    H[f] + \frac{1}{2} \int f (\widetilde{W}*f)
    \leq
    C e^{-\min\{N,2\}s}
    \quad \text{ for $N \neq 2$},
  \end{equation*}
  with $C (1+s) e^{-2s}$ on the right-hand side for $N=2$. This
  implies, using again \eqref{eq:Tdt},
  \begin{equation*}
    H[f] \leq C e^{-\min\{N,2\}s}
    \quad \text{ for $N \neq 2$},
  \end{equation*}
  for some other number $C$ depending only on $f_0$. Again, the right
  hand side should be substituted by $C (1+s) e^{-2s}$ for $N=2$. The
  final steps are just classical implications of this entropy
  control. Denoting the Maxwellian by
  $$
  \mathcal{M}(y) = (2\pi)^{-N/2} \exp\left(-\frac{|y|^2}2 \right).
  $$
  By means of the Csisz\'{a}r-Kullback inequality, see \cite{To,To99,2AMTU}, we get
  $$
  \|f(s)- M \mathcal{M}(s)\|_1^2
  \leq
  C (H(f(s))-H(\mathcal{M}(s))) \leq C e^{-\min\{N,2\}s},
  $$
  from which the announced result follows through the change of
  variables \eqref{eq:scaling}-\eqref{eq:scaling-inverse}.
\end{proof}


\subsection*{Acknowledgments}
J.~A.~Ca\~nizo and J.~A.~Carrillo are partially supported by the
projects MTM2011-27739-C04 from DGI-MICINN (Spain) and
2009-SGR-345 from AGAUR-Generalitat de Catalunya. M.~Schonbek is
partially supported by the NSF Grant DMS-0900909. M. Schonbek was
also supported by the sabbatical program of the MEC-Spain Grant
SAB2009-0024.


\bibliographystyle{plain}

\end{document}